\numberwithin{equation}{section}
\theoremstyle{cupplain}
\newtheorem{theorem}{Theorem}[section]
\newtheorem{lemma}[theorem]{Lemma}
\newtheorem{proposition}[theorem]{Proposition}
\theoremstyle{cupdefinition}
\newtheorem{definition}{Definition}[section]
\theoremstyle{cupremark}
\newtheorem{remark}[theorem]{Remark}
\theoremstyle{cupproof}
\newtheorem{proof}{Proof}
\def\A{{\mathbb A}}
\def\F{{\mathbb F}}
\def\N{{\mathbb N}}
\def\Q{{\mathbb Q}}
\def\R{{\mathbb R}}
\def\S{{\mathbb S}}
\def\Z{{\mathbb Z}}
\def\cA{{\mathcal A}}
\def\cC{{\mathcal C}}
\def\cF{{\mathcal F}}
\def\cO{{\mathcal O}}
\def\cR{{\mathcal R}}
\def\cS{{\mathcal S}}
\def\cT{{\mathcal T}}
\def\bfh{{\bf H}}
\def\Spec{{\rm Spec\,}}
\def\sss{{\S}}
\def\spm{{\sss[\pm 1]}}
\def\Se{\frak{ Sets}}
\def\Ses{{\Se_*}}
\def\Hom {{\rm{Hom}}}
\def\id{{\rm id}}
\def\pik{\pi^{\cT}}
\def\spz{{\Spec\Z}}
\def\dop{{\Delta^o}}
\def\crel{{\Se_{2,*}}}
\def\gop{{\Gamma^{o}}}
\def\Ab{\mathfrak{ Ab}}
\def\smod{{\sss-{\rm{Mod}}}}
\def\spzb{{\overline{\Spec\Z}}}
\def\uhom{{\underline \Hom}}
\newcommand{\ie}{{\it i.e.\/}\ }
\newcommand{\eg}{{\it e.g.\/}\ }
\newcommand{\opcit}{{\it op.cit.\/}\ }
\newcommand{\resp}{{\it resp.\/}\ }
\begin{document}

\begin{Frontmatter}

\title[Riemann-Roch for $\spzb$]{RIEMANN-ROCH FOR $\spzb$\thanks{Research supported by The Simons Foudation}}

\author[1,2]{ALAIN CONNES}

\author[3]{CATERINA CONSANI}

\address[1]{\orgname{Coll\`ege de France} 
\orgaddress{\city{Paris}, \state{F-75005 France}}

\email{alain@connes.org}

}

\address[2]{\orgname{IHES}
\orgaddress{\city{Bures-sur-Yvette} \state{91440 France}}

}

\address[3]{\orgdiv{Department of Mathematics} \orgname{The Johns Hopkins University}, 
\orgaddress{\city{Baltimore} \state{21218 USA}}

\email{cconsan1@jhu.edu}

}
%\received{}
%\revised{}
%\accepted{}

\maketitle

\authormark{A. Connes and C. Consani}

\abstract{We prove a Riemann-Roch  theorem of an entirely novel nature for  divisors on  the Arakelov  compactification of the algebraic spectrum of the integers. This result relies on the introduction of  three key concepts: the cohomologies  (attached to a divisor), their \emph{integer} dimension, and Serre duality. These  notions directly extend their classical counterparts for function fields. The Riemann-Roch formula equates the (integer valued) Euler characteristic of a divisor with a slight  modification  of the traditional  expression in terms of the sum of the degree of the divisor  and the  logarithm of 2.  Both the  definitions of the cohomologies and of their dimensions  rely on a universal arithmetic theory over the sphere spectrum that we had previously introduced  using  Segal's Gamma rings. By adopting this new perspective we can parallel Weil's adelic proof of the Riemann-Roch formula for function fields including the use of Pontryagin duality.}

\keywords{Riemann-Roch;  Arakelov compactification; Adeles; Segal's Gamma-ring}

\keywords[\textup{2010} Mathematics subject classification]{14C40; 14G40; 14H05; 11R56; 13F35; 18N60; 19D55}

%\begin{policy}[Impact Statement]
%Provide a 200 word impact statement that summarises the significance of the work, so that it can be quickly grasped by a wide audience (including industry, government and wider academia).
%\end{policy}

\end{Frontmatter}

\section{Introduction}
\label{sec1}

The  development of the  theory of curves from  complex Riemann surfaces to algebraic curves   over arbitrary fields, led F. K. Schmidt, in the 1930s, to the first proof of the Riemann-Roch theorem for function fields over  finite fields.  In nowadays terminology, that result involves the integer dimensions of the two cohomologies $H^0(D)$ and $H^1(D)$ of a divisor $D$ on the algebraic curve associated to the function field, as vector spaces over the (finite) field of constants.  The analogy between function fields in one variable over  fields of positive characteristic and number fields suggests, as  pointed out by A. Weil \cite{Weil39},  to  investigate the existence of a Riemann-Roch formula for number fields. Both the notions of a divisor $D$ with its degree, and the finite set underlying $H^0(D)$ are easy to define. These ideas  led S. Lang \cite{Lang} to   an asymptotic  formula  that relates for $\Q$, $\log \# H^0(D)$ and  $\deg D+ \log 2$, when $\deg D\to \infty$.  Another  attempt to  a Riemann-Roch formula for a number field  was promoted two decades ago by G. van der Geer and R. Shoof \cite{vGS} and  more  recently,  in the work of J.B. Bost \cite{Bost}. In this approach one starts from the functional equation in terms of the theta function  and rewrites that equation as a Riemann-Roch formula for the log-theta number, which is thus promoted to the status of a dimension. This view of the log-theta number as  a dimension remains  virtual  for the obvious reason that it outputs real numbers rather than integers. An  interpretation of these numbers as Murray-von Neumann dimensions could greatly improve their status  as dimensions, but  this step has not been achieved so far. \newline
The  most fundamental number field is the field $\Q$ of rational numbers that governs elementary arithmetics through its ring $\Z$ of integers. In this paper we prove a Riemann-Roch theorem of an entirely novel nature for this basic arithmetic structure. This result relies on the introduction of  three key concepts: the two cohomologies $H^\bullet(D)$ (attached to a divisor $D$),  their (integer) dimension, and Serre duality. These three notions directly extend their classical analogues for function fields. 
Our main result is the following theorem.
\begin{theorem} \label{rrspzbintro} 
Let $D$ be an Arakelov divisor on $\spzb$. Then 
\begin{equation}\label{rrforq1intro}
\dim_{\spm}H^0(D)-\dim_{\spm}H^1(D)=\bigg\lceil \deg' D+ \log' 2\bigg\rceil'	-\mathbf 1_L.
\end{equation}
Here,  $\lceil x \rceil'$ denotes the odd function on $\R$ that agrees with the ceiling function on positive reals, and $\mathbf{1}_L$ is the characteristic function of the exceptional set of finite Lebesgue measure which is the union of the intervals $(\log' \frac{3^k}{2},\log' \frac{3^k+1}{2})$:
\end{theorem}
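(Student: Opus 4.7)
The plan is to mirror Weil's adelic proof of the function-field Riemann-Roch theorem within the sphere-spectrum framework developed earlier in the paper. I fix an Arakelov divisor $D = \sum_p n_p[p] + t[\infty]$, with $\deg' D = t + \sum_p n_p \log p$.

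For $H^0$, by construction $H^0(D)$ is the $\spm$-module attached to the pointed set of rational numbers $q$ with $v_p(q) \ge -n_p$ for every prime and $|q| \le e^t$. These are exactly the $k/\prod_p p^{n_p}$ with $k \in \Z$ and $|k| \le e^{\deg' D}$, so the non-basepoint cardinality is $N(D) = 2\lfloor e^{\deg' D}\rfloor + 1$. The first step is to apply the integer-dimension formula for such symmetric pointed $\spm$-modules established earlier in the paper to obtain a closed-form expression for $\dim_{\spm}H^0(D)$ as a ceiling of $\log' N(D)$.

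For $H^1$, I would mirror Weil's setup by constructing the $\spm$-theoretic adelic complex and identifying $H^1(D)$ with the cokernel of the natural map $\Q \oplus V_D \to \A_\Q$, where $V_D$ is the local data determined by $D$. Applying Pontryagin duality on the adele class group $\A_\Q/\Q$ (available in the $\spm$-setting by the foundations laid in the paper) would yield Serre duality of the form $H^1(D) \simeq H^0(K-D)^\vee$ for a canonical divisor $K$ whose effective contribution supplies the $\log' 2$ in the final formula, in analogy with the canonical class of $\mathbb{P}^1$. A symmetric application of the first step then expresses $\dim_{\spm}H^1(D)$ in terms of $N(K-D)$.

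The main obstacle lies in the final combinatorial step. Subtracting the two expressions yields a difference of two ceilings $\lceil \log' N(D)\rceil - \lceil \log' N(K-D)\rceil$ which should telescope to $\lceil \deg' D + \log' 2\rceil'$. The telescoping fails in a controlled way: exactly when $e^{\deg' D}$ lies in an interval of the form $(3^k/2, (3^k+1)/2)$, the integer $N(D) = 2\lfloor e^{\deg' D}\rfloor + 1$ equals $3^k$, and the natural log of $3^k$ sits in a position where the left ceiling increments without being matched by its Serre-dual counterpart. A case analysis on these exceptional intervals, combined with the involution $D \mapsto K-D$ which swaps the two terms, should verify that the discrepancy is exactly $-1$ on $L$ and $0$ elsewhere, thereby producing the correction $\mathbf{1}_L$ and completing the proof.
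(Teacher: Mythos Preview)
Your approach departs from the paper's in a way that leaves a genuine gap. The paper does \emph{not} deduce Riemann--Roch from Serre duality; it computes $\dim_{\spm}H^1(D)$ directly, independently of $H^0$. After reducing by linear equivalence to $D=a\{\infty\}$, one identifies $H^1(D)$ with the tolerant module $U(1)_\lambda$ on $\R/\Z$ for $\lambda=e^a$ (Proposition~\ref{thedimh1}), and a balanced-ternary covering argument with the set $\{3^{-1},\ldots,3^{-m}\}$ yields $\dim_{\spm}U(1)_\lambda=\lceil(-a-\log 2)/\log 3\rceil$ for $\lambda<\tfrac12$ and $0$ otherwise (Proposition~\ref{caseh1}). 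The proof then splits not by subtracting two competing ceilings but by the structural fact that at most one of the two dimensions is nonzero: for $\deg D\ge -\log 2$ one has $\dim H^1(D)=0$ and Theorem~\ref{mainth} gives the right-hand side (including the $\mathbf 1_L$ correction); for $\deg D<-\log 2$ one has $\dim H^0(D)=0$, $\deg D\notin L$, and the $H^1$ formula gives the negative right-hand side directly via the odd extension $\lceil\cdot\rceil'$. Serre duality is established afterwards, in Section~\ref{sec5}, as a separate theorem.

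Your route through duality has two concrete problems. First, the duality actually available (Theorem~\ref{serredual}) is $H^0(K-D)\simeq\uhom_{\Gamma\cT_*}(H^1(D),U(1)_{1/4})$, not an isomorphism $H^1(D)\simeq H^0(K-D)^\vee$, and nothing in the $\spm$-framework guarantees that passing to such internal homs preserves the generating-set dimension of Definition~\ref{generator}. Second, the numerical identity you need, $\dim_{\spm} H^1(D)=\dim_{\spm} H^0(K-D)$, is in fact \emph{false}: comparing Proposition~\ref{caseh1} with Theorem~\ref{mainth} applied to $K-D$ shows the two sides differ by $\mathbf 1_L(\deg(K-D))$. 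Hence your telescoping step would place the $\mathbf 1_L$ correction on the wrong argument ($K-D$ rather than $D$) whenever $\deg D<-\log 2$, and repairing this without an independent computation of $\dim H^1$ is circular. (Minor point: you have conflated $\deg$ and $\deg'$; the bound is $|k|\le e^{\deg D}$, not $e^{\deg' D}$.)
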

In this formula,  the neperian logarithm  that is traditionally used to define the degree of a divisor $D=\sum_j a_j\{p_j\} + a\{\infty\}$, is replaced by the logarithm in base $3$.\footnote{All throughout the paper, to avoid confusion, we shall nevertheless use the neperian logarithm} This alteration is equivalent to the division by $\log 3$ i.e.  $\deg'(D):=\deg(D)/\log 3$.
Let us now comment on  the key role played by  the number $3$. In the  proof of Theorem~\ref{rrspzbintro}, the ring $\Z$
appears in a new light as a ring of \emph{polynomials} $\sum a_j 3^j$ with coefficients in the absolute base $\spm=\{0,\pm 1\}$. At an elementary level, it is not difficult to check that any integer $n\in \Z$ can be written uniquely as a finite sum of powers of $3$ with coefficients in $\{0,\pm 1\}$. The deep reason behind this  fact is that, for $p=3$ (and only for this rational prime) the Witt vectors with only finitely many non-zero components  
 form a subring  inside  the ring of $p$-adic integers, which turns out to be isomorphic to $\Z$.  In particular, the formula for the addition of integers written as polynomials $\sum a_j 3^j$ coincides with the formula for the addition of Witt vectors over $\F_3$. \newline
Next, we describe  the general  formalism that allows us to give a precise mathematical meaning both to  the base $\spm=\{0,\pm 1\}$ and (to) the two cohomologies $H^\bullet(D)$ with   their integer-valued dimension.  The basic step underlying this formalism is taken by  going beyond the traditional use of abelian groups and rings in algebra using  Segal's $\Gamma$-rings.  In \cite{CCprel, schemeF1} we   developed the fundamentals of a geometric theory where the initial ring $\Z$ of the category of rings is replaced by the sphere spectrum  $\sss$. A similar change of structures is  familiar in homotopy theory,  but in our work we insist on staying at a concrete computational level in the definition of the basic objects, while the link with the $\Gamma$-spaces of homotopy theory \cite{DGM}  enters only when doing homological algebra. The extension of the category of abelian groups is obtained by embedding faithfully and fully this category in the category $\smod$ of pointed covariant functors from finite pointed sets to pointed sets. Given an abelian group $H$ one assigns to a finite pointed set $X$ the pointed set of $H$-valued divisors on $X$. These divisors push forward by summing over the pre-image of a point. In  $\smod$ the traditional notion of ring becomes that of Segal's $\Gamma$-ring, and the  (classical) base $\Z$ is replaced by the $\Gamma$-ring $\sss$ here understood in its most elementary form of identity functor (from finite pointed sets to pointed sets). The base $\sss$ is in fact the most elementary categorical form of  the sphere spectrum in homotopy theory \cite{DGM}.
In analogy with the theory of algebraic curves  over finite fields, we
 have (previously) defined the structure sheaf of the one-point  compactification $\spzb$ of the algebraic spectrum of the integers   by suitably extending, at the archimedean place, the structure sheaf of ${\Spec\Z}$ as a subsheaf of the constant sheaf $\Q$. The  global sections of this sheaf determine the $\sss$-algebra  $\spm$. In  \cite{CCgromov} we have generalized the notion of homology for simplicial complexes, when group coefficients are replaced by $\sss$-modules. In \cite{CCAtiyah} we implemented homological algebra in this context by showing an extension  of the Dold-Kan correspondence which associates a $\Gamma$-space to a short complex of $\sss$-modules.\newline
  Let us now introduce  the cohomologies and their dimensions.
  An Arakelov divisor $D=\sum_j a_j\{p_j\} + a\{\infty\}$ on $\spzb$ determines a compact \emph{subset} $\cO(D)\subset \A_\Q$ of the adeles of $\Q$. This is  the product, indexed by the places of $\Q$, of the abelian group $\Z_p\subset \Q_p$ for each finite prime  $p\notin \{p_j\}_j$, the  abelian groups  $p_j^{-a_j}\Z_{p_j}$, and the interval $[-e^a,e^a]\subset \R$. This last component is implemented by a sub-module of the Eilenberg-MacLane $\spm$-module $H\R$  which functorially encodes the additive structure of the group $\R$. The morphisms of abelian groups used by Weil in the development of the adelic geometry of function fields still retain a meaning in this absolute set-up and they are viewed as  morphisms of $\spm$-modules. We  focus, in particular, on the morphism
 \begin{equation}\label{psiintro}
 \psi: \Q\times \cO(D) \to \A_\Q\quad \psi(q,a) = q+ a\quad \forall q\in \Q,~a\in \cO(D),
 \end{equation}
 where $\Q$ is embedded diagonally  in the adeles. The $\Gamma$-space $\bfh(D)$   associated by the Dold-Kan correspondence to \eqref{psiintro} ($\psi$ is viewed here as a short complex of $\spm$-modules: see appendix~\ref{secB}) provides the absolute incarnation of the Riemann-Roch problem for the divisor $D$. By implementing linear equivalence i.e. the multiplicative action of $\Q^\times$  on $\A_\Q$, one is reduced to consider only the case   $D=a\{\infty\}$, for which  $\ker(\psi)$ is   the $\spm$-module $H^0(D)=\| H\Z\|_{e^a}$ that associates to a finite pointed set $X$ the $\Z$-valued  divisors $\sum_i n_ix_i$, $x_i\in X$,  fulfilling the condition  $\sum_i \vert n_i\vert\leq e^a$ ($\vert\cdot\vert=$  euclidean absolute value).  By definition, $H^0(D)$ is a covariant functor $\Gamma^{o}\longrightarrow \Se_*$ from the small category $\Gamma^{o}$ (a skeleton of the category of finite pointed sets) to pointed sets. It  keeps track of the partially defined addition in $I=\| H\Z\|_{e^a}(1_+)=[-e^a,e^a]\cap \Z$, so that for $n_i\in I$, the sum $\sum n_i$ is meaningful provided  $\sum_i \vert n_i\vert \leq e^a$.
The dimension $\dim_{\spm}H^0(D)$ is  defined to be the smallest number of \emph{linear} generators of $I$. More precisely, a subset $F\subset I$ linearly generates if and only if for every $m\in I$ there exist coefficients $\alpha(f)\in \{-1,0,1\}=\spm(1_+)$ such that $m=\sum \alpha(f)f$, $f\in F$, and $\sum \vert\alpha(f)f\vert\leq e^a$ (see section~\ref{sec3}). The $\spm$-module $H^0(D)$  only depends upon the integer part $n=\lfloor e^a\rfloor$ of $e^a$ and Proposition \ref{dimhzn} determines its dimension to be\footnote{For the  values $n=4,13,40,\ldots, (3^k-1)/2$, every element of $I$ is uniquely written in terms of the generating set $F=\{3^i\mid 0\leq i<k\}$ (Proposition \ref{dimhzn} $(iii)$).}
\begin{equation}\label{dimhznintro}
\dim_{\spm}(\| H\Z\|_n)=\bigg\lceil \frac{\log(2n+1)}{\log 3}\bigg\rceil.
\end{equation}
The cokernel of $\psi$ in \eqref{psiintro}, that is $H^1(D)$, is defined in terms of the $\Gamma$-space $\bfh(D)$. The lack of transitivity of the homotopy relation in non-Kan complexes is dealt with using the classical notion of tolerance relation \cite{P}. Thus $H^1(D)$ is  the couple of the $\spm$-module $H\A_\Q$ and a suitable tolerance relation $\cR$ on it (see  Proposition~\ref{setup1} and  appendix~\ref{secA}). This relation is defined by pairs whose difference belongs to the image of $\psi$. More specifically, $H^1(D)$ defines an object of the category $\Gamma\cT_*$ of tolerance $\sss$-modules (see appendix~\ref{secA}). Its dimension $\dim_{\spm}H^1(D)$ is defined to be the smallest number of linear generators  where a subset $F\subset H\A_\Q(1_+)=\A_\Q$ linearly generates if and only if for every $x\in H\A_\Q(1_+)$ there exists coefficients $\alpha(f)\in \{-1,0,1\}$ such that the pair $(x,\sum \alpha(f)f)$ belongs to the relation $\cR$ (see section~\ref{sec4}).\newline An attentive reader  should readily recognize  that our construction strictly parallels Weil's adelic proof of the Riemann-Roch formula for function fields and, in particular, that our notion of dimension  is a straigthforward generalization of the classical dimension for vector spaces.   For an archimedean divisor  $D=a\{\infty\}$, the dimension of $H^1(D)$ is the same as the dimension of the pair $(H(\R/\Z),\cR)$, where the relation $\cR$ on $H(\R/\Z)(1_+)=\R/\Z$ is given by
 \begin{equation}\label{tolerrelat}
(x,y)\in \cR \iff d(x,y)\leq e^a,
\end{equation}
 for $d$ the translation invariant Riemannian metric of length $1$ on $\R/\Z$.  In Proposition \ref{caseh1} we show that this dimension is the integer 
 \begin{equation}\label{dimh1intro}
\dim_{\spm}(H(\R/\Z),\cR)=\bigg\lceil \frac{-a- \log 2}{\log 3}\bigg\rceil.	
\end{equation}
Theorem~\ref{rrspzbintro}    follows from  \eqref{dimhznintro} when  $\deg D \geq -\log 2$, since in this case $\dim_{\spm}H^1(D)=0$. The case $\deg D<-\log 2$ is proved using \eqref{dimh1intro}.
The symmetry of the term  $\lceil \frac{\deg D+ \log 2}{\log 3}\rceil'$ under the replacement of  $D$ by the divisor $K-D$, with $K:=-2 \{2\}$, is the numerical evidence of a geometric duality holding on the curve $\spzb$ over $\spm$ (see section~\ref{sec5}). This is   precisely stated by the following  (see Theorem~\ref{serredual})
\begin{theorem}\label{serredualintro}
Let $D$ be an Arakelov divisor on $\spzb$ and $K=-2 \{2\}$, with $\deg K=-2 \log 2$. Then there is an isomorphism of $\spm$-modules.
\begin{equation}\label{dualdual}
H^0(K-D)\simeq \uhom_{\Gamma\cT_*}(H^1(D),U(1)_{\frac 14}),
\end{equation}
where $U(1)_{\frac 14}$ coincides with $H^1(K)$,  playing the role of the dualizing module in Pontryagin duality.
\end{theorem}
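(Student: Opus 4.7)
The plan is to mimic Weil's adelic proof of Serre duality for function fields, substituting our categories of $\spm$-modules and tolerance $\spm$-modules for finite-dimensional vector spaces. The key ingredient is the standard additive character $\alpha\colon \A_\Q\to \R/\Z$ trivial on $\Q\subset \A_\Q$ (normalized so that its local components at finite primes $p$ have conductor $\Z_p$, and so that its archimedean component is $x\mapsto -x \bmod \Z$). This character realises the Pontryagin self-duality of $\A_\Q$ and makes $\Q$ its own annihilator.

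We construct the candidate map $\Phi\colon H^0(K-D)\to \uhom_{\Gamma\cT_*}(H^1(D),U(1)_{\frac14})$ as follows. Given $q\in H^0(K-D)\subset \Q$, define $\phi_q\colon H\A_\Q\to H(\R/\Z)$ by
\[
\phi_q(x):=\alpha(qx/4),
\]
where the factor $1/4$ compensates exactly for the canonical divisor $K=-2\{2\}$; equivalently, one replaces $\alpha$ by the character $x\mapsto \alpha(x/4)$, which is still trivial on $\Q$ but whose local conductor at $2$ is $4\Z_2$ rather than $\Z_2$. Since $\phi_q$ is additive and $\spm$-linear, it is a morphism of $\spm$-modules. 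To descend to a morphism in $\Gamma\cT_*$ we must check that $\phi_q(\Q+\cO(D))\subset [-1/4,1/4]\subset \R/\Z$:
\begin{itemize}
\item $\phi_q|_\Q=0$ since $q\Q/4\subset \Q$ and $\alpha|_\Q=0$;
\item at each finite prime $p$, the condition $\phi_q(\cO(D)_p)=0$ translates to $v_p(q/4)\geq -D_p$, which at $p=2$ becomes $v_2(q)\geq 2-D_2$ and elsewhere coincides with $v_p(q)\geq -D_p$; these are exactly the finite-place constraints defining $H^0(K-D)$;
\item at infinity, $|\phi_q(x_\infty)|_{\R/\Z}\leq |q|\,e^a/4$ for $x_\infty\in[-e^a,e^a]$, so the tolerance $\leq 1/4$ holds precisely when $|q|\leq e^{-a}$, i.e.\ when $q$ satisfies the archimedean bound of $H^0(K-D)$.
\end{itemize}
Hence $\phi_q\in \uhom_{\Gamma\cT_*}(H^1(D),U(1)_{\frac14})$, and $\Phi\colon q\mapsto \phi_q$ is $\spm$-linear by functoriality of the pairing in $q$.

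It remains to prove $\Phi$ is an isomorphism of $\spm$-modules. Injectivity is immediate from Pontryagin non-degeneracy: $\phi_q=0$ forces $q=0$. For surjectivity, let $\phi\in \uhom_{\Gamma\cT_*}(H^1(D),U(1)_{\frac14})$. As a morphism of $\spm$-modules, $\phi$ is in particular a continuous additive character of $\A_\Q$, so $\phi(x)=\alpha(\tilde q\, x/4)$ for a unique $\tilde q\in \A_\Q$ by Pontryagin self-duality. The tolerance compatibility implies that $\phi$ sends the subgroups $\Q$ and $\cO(D)_p$ (for each finite $p$) into the $1/4$-ball of $\R/\Z$; since the only subgroup of $\R/\Z$ contained in that ball is $\{0\}$, $\phi$ actually vanishes on these subgroups. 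Pontryagin duality then forces $\tilde q\in \Q$ and imposes the $p$-adic valuation bounds, while the archimedean tolerance condition gives $|\tilde q|\leq e^{-a}$. Together these are precisely the conditions defining $H^0(K-D)$, so $\tilde q\in H^0(K-D)$ and $\Phi(\tilde q)=\phi$.

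The main obstacle is the delicate matching, prime-by-prime and at infinity, between the tolerance conditions of the category $\Gamma\cT_*$ and the defining conditions of $H^0(K-D)$. The critical point is that the tolerance $1/4=2^{-2}$ built into the dualizing module $U(1)_{\frac14}$, together with the shift by $1/4$ in the character, is exactly what selects the special divisor $K=-2\{2\}$: at the prime $2$ it produces the $v_2\geq 2$ condition, and at infinity it yields the precise bound $|q|\leq e^{-a}$ that makes Pontryagin duality land on $H^0(K-D)$. A secondary verification, conceptually routine but notationally heavy, is compatibility of the correspondence $q\mapsto \phi_q$ with the full $\spm$-module and $\Gamma\cT_*$ functorial structures evaluated at arbitrary finite pointed sets $X\in \gop$, so that $\Phi$ is not merely a bijection on global sections but an honest isomorphism of $\spm$-modules.
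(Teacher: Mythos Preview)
Your argument is essentially correct and works, but it is a genuinely different route from the paper's. The paper does not stay at the adelic level: it first invokes Proposition~\ref{thedimh1} to write $H^1(D)=\pi^*((\R/L,d)_\lambda)$ as the pullback of a metric tolerant module along the surjection $\pi:\A_\Q\to\R/L$, then uses the general duality Proposition~\ref{pontrj}$(ii)$ to replace $\uhom_{\Gamma\cT_*}(H^1(D),U(1)_{1/4})$ by $\uhom_{\Gamma\cT_*}((\R/L,d)_\lambda,U(1)_{1/4})$, and finally (after linear equivalence to $D=a\{\infty\}$, so $L=\Z$) computes this last $\uhom$ directly from the Pontryagin dual $\widehat{\R/\Z}=\Z$, identifying it with $\Vert H\Z\Vert_{1/(4\lambda)}$. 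Your approach instead stays global, uses the self-duality of $\A_\Q$ via the standard character $\alpha$, and reads off the local conditions prime by prime; this is closer in spirit to Weil's function-field proof and makes the role of $K=-2\{2\}$ (via the factor $1/4$) more transparent. The paper's approach, on the other hand, packages the reduction step into reusable lemmas (Propositions~\ref{pontrj} and~\ref{thedimh1}) and avoids handling the finite places individually.

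One point needs tightening. You write that a morphism of $\spm$-modules $\phi:H\A_\Q\to HU(1)$ is ``in particular a continuous additive character of $\A_\Q$''. Full faithfulness of $H$ only gives you a \emph{group} homomorphism $\A_\Q\to U(1)$; continuity does not come from the $\spm$-structure but from the tolerance. Concretely: since $j(\cO(D)_f)$ and $\Q$ lie in the image of $\psi$, the tolerance forces $\phi(j(\cO(D)_f))$ and $\phi(\Q)$ into the ball of radius $1/4$ in $U(1)$, and as these images are subgroups they must be $\{0\}$; vanishing on the open compact $j(\cO(D)_f)$ then gives continuity at the finite places, while the level-$k$ tolerance with $k$ repeated copies of a small real $r$ gives continuity at infinity (exactly as in the proof of Proposition~\ref{pontrj}). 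The same repeated-entry trick is what you need to extract the sharp bound $\vert\tilde q\vert\leq e^{-a}$ in the surjectivity step, since the level-$1_+$ condition alone is blurred by the reduction mod $\Z$. Once these points are made explicit, your global argument goes through and also yields the correct $\spm$-module structure at all levels $k_+$ (the joint bound $\sum_j\vert q_j\vert\leq e^{-a}$ comes out of the same computation).
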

\noindent The isomorphism $H^0(K-D)\simeq \uhom_{\Gamma\cT_*}(H^1(D),H^1(K))$ is the analogue of Grothendieck's version of Serre's duality \cite{GS} (12.15.1955).
The duality  \eqref{dualdual} derives from   Pontryagin's duality holding for tolerant $\spm$-modules  (Proposition~\ref{pontrj}), once again in analogy with   the corresponding result proven in \cite{Weil39} for curves over  fields of positive characteristic. The divisor $K$ plays the role of the canonical divisor.

The paper is organized as follows, in section \ref{sec2} we adapt to the adelic framework the construction of the $\Gamma$-space $\bfh(D)$   naturally associated to an Arakelov divisor $D$. This gives, in Proposition \ref{setup1}, the cohomologies $H^0(D)$ and $H^1(D)$. We compute the dimension of $H^0(D)$  in section \ref{sec3} (Theorem \ref{mainth}), and of $H^1(D)$ in section \ref{sec4} which concludes with the proof of the main Theorem \ref{rrspzb}. Section \ref{sec5} is devoted to Serre and Pontrjagin dualities. In appendix \ref{secA} we develop the needed generalities on tolerance $\sss$-modules and in appendix \ref{secB} the technical details of the  construction, through the Dold-Kan correspondence, of the  $\Gamma$-space $\bfh(D)$.

\section{The  cohomology $H^\bullet(D)$}\label{sec2}

We recall, from \cite{CCAtiyah}, the construction of the $\Gamma$-space $\bfh(D)$   naturally associated to an Arakelov divisor $D$ on $\spzb$. This space  is the  absolute homological incarnation of the Riemann-Roch problem for the divisor $D$. We formulate this construction  in terms of the adeles of $\Q$.

Let $D=\sum_j a_j\{p_j\} + a\{\infty\}$ be an Arakelov divisor on $\spzb$. This is a formal finite sum with $a_j\in\Z$ and $a\in\R$, where $p_j$ are rational primes in $\Z$ and the symbol $\infty$ stands for the restriction $\vert\cdot\vert: \Q\stackrel{}{\to} \R$ of the  euclidean absolute value $\vert\cdot\vert_\infty$. Let  $\Sigma_\Q$ denote the full set of places $\nu$ of $\Q$.   To  $D$ is naturally associated the following idele $\exp(D)$ of $\Q$ 
\[
\exp(D)_\nu:=\begin{cases}p_j^{-a_j}& \text{if $\nu=\nu_j$}\\
1& \forall \nu\neq \nu_j,\, \forall j\\
e^a& \text{if $\nu=\infty$}.	
\end{cases}
\] 
The equality $\exp(D)(\nu)=\vert \exp(D)_\nu\vert_\nu$ 
 defines a map $\exp(D):\Sigma_\Q\to \R_+^*$ such that $\exp(D)(\nu)\in  {\rm mod}(\Q_\nu)$, $\forall \nu \in \Sigma_\Q$, and $\exp(D)(\nu)=1$,  for   $ \nu\neq \infty,  \nu_j, \forall j$.  To the divisor $D$ corresponds the compact \emph{subset}  of the adeles of $\Q$
\[
\cO(D):=\big\{(a_\nu)\in\A_\Q\mid\vert a_\nu\vert_\nu\le \exp(D)(\nu), \forall \nu\in\Sigma_\Q\big\}\subset \A_\Q.
\]
By definition  $\cO(D)=\cO(D)_f \times \cO(D)_\infty=\prod_\nu \cO(D)_\nu$ with
\[
\cO(D)_\nu=\begin{cases}p_j^{-a_j}\Z_{p_j}& \text{if $\nu=\nu_j$}\\
\Z_\nu & \forall \nu\neq \nu_j,\, \forall j,~ \nu<\infty\\
[-e^a, e^a]& \text{if $\nu=\infty$}.
\end{cases}
\]
In particular, the product $\cO(D)_f=\prod_{\nu\neq \infty} \cO(D)_\nu$ is a compact abelian group. The archimedean component, on the other hand, \ie the real interval $[-e^a, e^a]$, is not, and for this reason we implement the functorial viewpoint to suitably understand $\cO(D)$.  Indeed, to $\cO(D)_\infty$  we associate the  covariant functor 
\[
\| H\R\|_{e^a}: \Gamma^{o}\longrightarrow \Se_*\qquad \| H\R\|_{e^a}(F):=\bigg\{\phi\in H\R(F)\mid \sum_{F\setminus \{*\}} \vert\phi(x)\vert\leq e^a\bigg\}
\]
from the opposite of the Segal category (see \eg \cite{DGM} Chpt. 2 and \cite{CCprel})  to pointed sets, which maps a finite pointed set $F$ to the pointed set of $\R$-valued divisors on $F$ vanishing on the base point and whose total mass $\sum_{F\setminus \{*\}} \vert\phi(x)\vert$ is bounded by $e^a$. Covariant functors $\Gamma^{o}\longrightarrow \Se_*$ and their natural transformations determine the category $\Gamma\Ses$ of $\Gamma$-sets (aka $\sss$-modules). In particular, the Eilenberg-MacLane functor $H$  encodes an abelian group $A$ as the covariant functor $HA:\Gamma^{o}\longrightarrow \Se_*$  that associates to  $F$ the pointed set of $A$-valued divisors on $F$ vanishing on the base point. Functoriality holds by taking  sums  on the inverse images of a point. The functor $HA$ encodes the addition on $A=HA(1_+)$. In general, let $\sigma\in \Hom_\gop(k_+,1_+)$ with $\sigma(\ell)=1$ $\forall \ell\neq *$ and
\begin{equation}\label{partmaps}
\delta(j,k)\in \Hom_\gop(k_+,1_+), \quad \delta(j,k)(\ell):=\begin{cases} 1 &\text{if $\ell=j$}\\ * &\text{if $\ell\neq j$}.
\end{cases}
\end{equation} 
Given an $\sss$-module $\cF$ and  elements $x,x_j\in \cF(1_+)$, $j=1,\ldots, k$, one writes 
\begin{equation}\label{defnsum}
x=\sum_j x_j\iff \exists z	\in \cF(k_+)~\text{s.t.}~  \cF(\sigma)(z)=x,~ \cF(\delta(j,k))(z)=x_j,\ \forall j.
\end{equation}
  For $A=\R$ the key point is that this general construction respects the bound on the total mass while encoding the addition. One easily sees that $\| H\R\|_{e^a}$ is an $\spm$-module (an ``($\spm\wedge -)$-algebra'' as in  \cite{DGM} 2.1.5), where  $\spm:  \Gamma^{o}\longrightarrow \Se_*$, $\spm(F):=\{-1,0,1\}\wedge F$ is the spherical monoid algebra of the multiplicative monoid $\{\pm 1\}$. In particular, the inclusion functor $\| H\R\|_{e^a}\longrightarrow H\R$ determines $\| H\R\|_{e^a}$ as  a sub $\spm$-module of $H\R$, where $\sss: \Gamma^{o}\longrightarrow \Se_*$ is the identity functor. 

Given two $\sss$-modules $\cF_j$, $j=1,2$, one defines their  product as the functor
\[
\cF_1 \times \cF_2:\Gamma^{o}\longrightarrow \Se_*, \qquad (\cF_1 \times \cF_2)(F)=\cF_1(F)\times \cF_2(F)
\]
 with the base point of $\cF_1(F)\times \cF_2(F)$ taken to be $(*,*)$. For abelian groups $A,B$ one has a canonical isomorphism $H(A\times B)\simeq HA\times HB$. In particular, 
 the morphism of  addition in adeles $\alpha:\Q\times \A_\Q \to \A_\Q$, $\alpha(q, a) = q+ a$     (using the diagonal embedding of $\Q$ in $\A_\Q$) determines a morphism of $\spm$-modules $H\alpha: H\Q\times H\A_\Q \longrightarrow H\A_\Q$. Next proposition shows how this functorial construction is well combined with the adelic formalism.
 
\begin{proposition} \label{setup}
\begin{enumerate}
\item[(i)] The $\spm$-module 
\[
H\cO(D):=H(\cO(D)_f)\times \| H\R\|_{e^a}
\]
determines, canonically, a sub $\spm$-module $\iota:H\cO(D)\longrightarrow H\A_\Q$.
\item[(ii)] The restriction  of $H\alpha$ to $H\Q\times H\cO(D)$ defines a morphism of $\spm$-modules \begin{equation}\label{psi}
 \psi: H\Q\times H\cO(D) \longrightarrow H\A_\Q.
 \end{equation} 	
\end{enumerate}
\end{proposition}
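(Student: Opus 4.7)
The plan is to establish the inclusion $\iota$ in (i) by decomposing the adeles into their non-archimedean and archimedean parts, and then to deduce (ii) by simple composition. First I would use the decomposition $\A_\Q \simeq \A_{\Q,f}\times \R$ of the additive group of adeles. Since, as recalled in the excerpt, the Eilenberg--MacLane functor $H$ converts direct products of abelian groups into products of $\sss$-modules via the canonical isomorphism $H(A\times B)\simeq HA\times HB$, this gives a canonical identification
\[
H\A_\Q\simeq H\A_{\Q,f}\times H\R.
\]

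For the non-archimedean factor, the set $\cO(D)_f=\prod_{\nu\neq\infty}\cO(D)_\nu$ is an honest compact abelian subgroup of $\A_{\Q,f}$, each local factor being either $\Z_\nu$ or $p_j^{-a_j}\Z_{p_j}$ (with the former for all but finitely many $\nu$). Applying $H$ to this subgroup inclusion and using functoriality produces a sub-$\spm$-module $H(\cO(D)_f)\hookrightarrow H\A_{\Q,f}$. For the archimedean factor, the excerpt already exhibits $\|H\R\|_{e^a}$ as a sub-$\spm$-module of $H\R$ through the total-mass bound. Taking the product of these two sub-$\spm$-module inclusions supplies the canonical map
\[
\iota:H\cO(D)=H(\cO(D)_f)\times\|H\R\|_{e^a}\hookrightarrow H\A_{\Q,f}\times H\R\simeq H\A_\Q,
\]
which proves (i). Statement (ii) is then obtained by post-composing with $H\alpha$: since the restriction of a morphism of $\spm$-modules to a sub-$\spm$-module is again a morphism of $\spm$-modules, $\psi:=H\alpha\circ(\id_{H\Q}\times\iota)$ has all the required properties, and at level $1_+$ recovers the map $(q,a)\mapsto q+a$ of \eqref{psiintro}.

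The main point requiring some care is the verification that $\|H\R\|_{e^a}$ is a sub-\emph{functor} of $H\R$, not merely a sub-pointed-set at each level. This amounts to checking that the total mass $\sum_{F\setminus\{*\}}|\phi(x)|$ is preserved by the $\spm$-action (obvious, since signs do not change absolute values) and is non-increasing under pushforward along any map in $\gop$ (which follows from the triangle inequality $\bigl|\sum\phi(x)\bigr|\le\sum|\phi(x)|$ on each fibre). Once these two inequalities are noted, the remaining content of the proposition is purely formal: it is a direct consequence of the functoriality of $H$, of $H$ turning direct sums of abelian groups into products of $\spm$-modules, and of the definition of the product in $\Gamma\Ses$.
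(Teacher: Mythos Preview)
Your proof is correct and follows essentially the same approach as the paper: decompose $\A_\Q=\A_f\times\R$, take $\iota$ to be the product of $H\iota_f$ (with $\iota_f:\cO(D)_f\hookrightarrow\A_f$) and the inclusion $\|H\R\|_{e^a}\hookrightarrow H\R$, and then set $\psi=H\alpha\circ(H\id_\Q\times\iota)$. Your additional check that $\|H\R\|_{e^a}$ is a sub-functor via the triangle inequality is a welcome elaboration of a point the paper states without proof just before the proposition.
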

\proof $(i)$~With $\A_\Q=\A_f\times \R$, let $\iota_f:\cO(D)_f\to \A_f$ be the inclusion of abelian groups.  Then  $\iota:H\cO(D)\to H\A_\Q$ is the product of $H\iota_f$ and the inclusion $\| H\R\|_{e^a}\to H\R$.\newline
 $(ii)$~By composition one obtains $\psi= H\alpha \circ (H\id_\Q \times \iota)$. \endproof 
 
 The morphism $\psi$ in \eqref{psi} is obtained by restricting  $H\alpha$  to the (adelic) divisor $D$, where $\alpha$ is a morphism of abelian groups.   The theory developed in \cite{CCAtiyah}  associates to   $\psi$ a $\Gamma$-space $\bfh(D)$, by implementing the Dold-Kan correspondence in the special case of a short complex of abelian groups and its restriction to a sub-$\sss$-module. The $\Gamma$-space $\bfh(D)$  supplies the absolute cohomology of the divisor $D$. We recall this construction in appendix \ref{secB}. Moreover, we review how the generalized homotopy $\pik_\ast(\bfh(D))$, where $\cT$ is the category of tolerance relations defined in \ref{gamma-t},  is meaningful in this context supplying, with $\pik_1(\bfh(D))$   and $\pik_0(\bfh(D))$,  the (tolerant) $\sss$-modules descriptions  of \resp  the  kernel and the cokernel  of $\psi$. In other words, one obtains the following result

\begin{proposition} \label{setup1}
\begin{enumerate}
\item[(i)] The kernel  of $\psi$ is the  $\spm$-module:
\begin{equation}\label{kerpsi}
H^0(D) =(H\id_\Q \times \iota)^{-1}( H\ker( \alpha))\simeq \iota^{-1} H\Q =\Vert H^0(\spz ,\cO(D)_f)\Vert_{e^a}. 
\end{equation}
\item[(ii)] The cokernel of $\psi$ is the  $\spm$-module $H^1(D) = (H\A_\Q, \cR)$  endowed with the relations
\begin{equation}\label{cokerpsi}
 \cR_k:=\{ (x,y)\in H\A_\Q(k_+)\times  H\A_\Q(k_+)\mid x-y\in {\rm Image}\, \psi(k_+)\}.
\end{equation}
\item[(iii)] Both $H^0(D)$ and $H^1(D)$ only depend, up to isomorphism, on the linear equivalence class of the divisor $D$.
\end{enumerate}
\end{proposition}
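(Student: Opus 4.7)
The overall strategy is to exploit the factorization $\psi = H\alpha \circ (H\id_\Q \times \iota)$ from Proposition~\ref{setup} together with the fact that the Eilenberg–MacLane embedding $H:\Ab \to \smod$ is exact on short complexes of abelian groups: in particular $H\ker(\alpha) = \ker(H\alpha)$ inside $H\Q \times H\A_\Q$.

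For (i), the kernel of $\alpha:\Q\times\A_\Q \to \A_\Q$, $(q,a)\mapsto q+a$, is the antidiagonal $\{(q,-q)\mid q\in\Q\}$, canonically isomorphic to $\Q$ via either projection; hence $\ker(H\alpha)\simeq H\Q$. Pulling back along $H\id_\Q\times\iota$, a section of $\ker(\psi)$ over $k_+$ is a pair $(\phi,\chi)$ with $\chi\in H\cO(D)(k_+)$, $\phi\in H\Q(k_+)$, and $\phi(x)=-\chi(x)$ in $\A_\Q$ for every $x\in k_+\setminus\{*\}$. The second projection $(\phi,\chi)\mapsto\chi$ is then an isomorphism of $\spm$-modules onto $\iota^{-1}H\Q$, whose sections over $k_+$ consist of $\cO(D)$-valued divisors taking values in $\Q\cap\cO(D)$. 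Strong approximation identifies $\Q\cap\cO(D)_f$ with the fractional ideal $H^0(\spz,\cO(D)_f)\subset\Q$, while the archimedean factor $\|H\R\|_{e^a}$ forces the total mass $\sum_x|\chi(x)|$ to be bounded by $e^a$. This yields exactly the restricted $\spm$-module $\|H^0(\spz,\cO(D)_f)\|_{e^a}$ of \eqref{kerpsi}.

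For (ii), I invoke the Dold–Kan correspondence for short complexes of $\sss$-modules recalled in appendix~\ref{secB}: the associated $\Gamma$-space $\bfh(D)$ has $\pik_0(\bfh(D))$ canonically identified with the tolerant $\spm$-module whose underlying functor is $H\A_\Q$ and whose tolerance relation at level $k_+$ identifies two elements of $H\A_\Q(k_+)$ whenever their difference lies in ${\rm Image}\,\psi(k_+)$. This is precisely $\cR_k$ as in \eqref{cokerpsi}. Since ${\rm Image}\,\psi$ is stable under the Segal structural maps $\cF(\sigma)$ and $\cF(\delta(j,k))$ of \eqref{partmaps} and under the $\spm$-action (both $\id_\Q\times\iota$ and $H\alpha$ being $\spm$-linear), $\cR$ does define a tolerance $\sss$-module in the sense of appendix~\ref{secA}.

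For (iii), a linear equivalence $D' = D+(f)$ with $f\in\Q^\times$ gives $\cO(D')_\nu = f^{-1}\cO(D)_\nu$ at every place $\nu$ and $|f^{-1}|_\infty\, e^a = e^{a'}$ at infinity, so multiplication by $f^{-1}$ is an isomorphism of $\spm$-modules $H\cO(D)\to H\cO(D')$ (it respects the archimedean mass bound precisely by the product formula applied to $f$). Together with the automorphism of $H\Q$ and of $H\A_\Q$ given by $f^{-1}$, it intertwines $\psi_D$ with $\psi_{D'}$ because $f^{-1}(q+a)=f^{-1}q+f^{-1}a$. These compatible isomorphisms therefore induce isomorphisms between the kernel and cokernel pairs of the two morphisms, preserving in particular the tolerance relation $\cR$. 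The main obstacle is the bookkeeping in (ii): one must check, level by level in $k_+$, that the abstract cokernel produced by the Dold–Kan machinery of appendix~\ref{secB} is represented by \emph{exactly} the relation \eqref{cokerpsi} — this amounts to verifying that the image of $\psi$ is already saturated under the $\spm$-action and the Segal structure, so that no further transitive closure is needed before passing to tolerance.
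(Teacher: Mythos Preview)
Your proof is correct and follows essentially the same route as the paper, which reduces (i) and (ii) to Proposition~\ref{proppi0} in appendix~\ref{secB} and then unwinds the kernel condition componentwise exactly as you do; for (iii) the paper simply invokes Proposition~\ref{bfhd} (the whole $\Gamma$-space $\bfh(D)$ depends only on the linear equivalence class), whose content is precisely your multiplication-by-$f^{-1}$ argument. Two inessential remarks: the identity $\Q\cap\cO(D)_f=H^0(\spz,\cO(D)_f)$ is just the definition and does not require strong approximation, and your closing worry about transitive closure is moot since tolerance relations are by definition only reflexive and symmetric.
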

\proof We refer to appendix \ref{sectproofprop}.
\endproof  

\section{The dimension of $H^0(D)$}\label{sec3}
  
In this section we consider, for each integer $n>0$, the $\spm$-module $\Vert H\Z\Vert_n$ 
\[
\| H\Z\|_{n}: \Gamma^{o}\longrightarrow \Se_*\qquad \| H\Z\|_{n}(F):=\{\phi\in H\Z(F)\mid \sum_{F\setminus \{*\}} \vert\phi(x)\vert\leq n\}
\]
and evaluate its dimension.  It follows from the $\spm$-module structure, that \eqref{defnsum} holds for a sum of the form 
\[
\sum_F \alpha_j j\in [-n,n]\cap \Z=\Vert H\Z\Vert_n(1_+) \quad \forall \alpha_j\in \{-1,0,1\}~~\text{s.t.}~ \sum_F \vert \alpha_j j\vert \leq n.
\]
By applying Definition \ref{generator}, 	a subset $F\subset [-n,n]\cap \Z=\Vert H\Z\Vert_n(1_+)$ generates\footnote{For simplicity we use from now on the term ``generates" for ``linearly generates"} $\Vert H\Z\Vert_n(1_+)$ if and only if for every element  $x\in \Vert H\Z\Vert_n(1_+)$ there exists coefficients $\alpha_j\in \{-1,0,1\}$, $j\in F$, such that $x=\sum_F \alpha_j j$ and $\sum_F \vert \alpha_j j\vert \leq n$. The {\em dimension} $\dim_{\spm}\Vert H\Z\Vert_n$ is the minimal cardinality of a generating set.
	
The explicit computation of  $\dim_{\spm}\Vert H\Z\Vert_n$ follows from the next two statements.
	
\begin{lemma}\label{bijectivecase} For $k\in\N$,  let $n=(3^k-1)/2$. Consider the subset $F:=\{3^i,0\leq i\leq k-1\}\subset [-n,n]$. Then $F$ is a generating set  and the map 
\begin{equation}\label{maptheta}
	\theta:\{-1,0,1\}^{\times k}\to [-n,n]\cap \Z, \qquad \theta((\alpha_i)):=\sum_{i=0}^{k-1} \alpha_i 3^i
	\end{equation}
is bijective. Moreover $\sum_{i=0}^{k-1} \vert \alpha_i 3^i\vert \leq n$,	 for all $\alpha_i\in \{-1,0,1\}^{\times k}$.
	\end{lemma}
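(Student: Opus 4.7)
The plan is to address the three assertions in the order that makes them cascade. First I would verify the bound $\sum_{i=0}^{k-1}|\alpha_i 3^i|\le n$, since it simultaneously establishes the last claim of the lemma and shows that the map $\theta$ actually lands in $[-n,n]\cap\Z$. This is immediate from $|\alpha_i|\le 1$ together with the geometric series identity $\sum_{i=0}^{k-1} 3^i=(3^k-1)/2=n$.

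Next I would establish the bijectivity of $\theta$ in two steps. A cardinality count gives $|\{-1,0,1\}^{\times k}|=3^k=2n+1=|[-n,n]\cap\Z|$, so it suffices to prove injectivity. For injectivity, I would argue by contradiction in the standard balanced ternary style: if two distinct tuples $(\alpha_i),(\beta_i)$ satisfy $\theta((\alpha_i))=\theta((\beta_i))$, write $\gamma_i:=\alpha_i-\beta_i\in\{-2,-1,0,1,2\}$ and let $j$ be the largest index with $\gamma_j\neq 0$. Then the top term contributes $|\gamma_j 3^j|\ge 3^j$, whereas the tail satisfies
\[
\Big|\sum_{i<j}\gamma_i 3^i\Big|\le 2\sum_{i=0}^{j-1}3^i = 3^j-1<3^j,
\]
which contradicts $\sum_i \gamma_i 3^i=0$. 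This is the only step requiring care: one has to remember that the difference coefficients lie in $\{-2,\ldots,2\}$, and track the factor of $2$ to see that the geometric tail is strictly dominated by the leading term.

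Finally, generation of $\Vert H\Z\Vert_n(1_+)$ by $F$ follows formally from the previous steps. Given any $m\in [-n,n]\cap\Z=\Vert H\Z\Vert_n(1_+)$, surjectivity of $\theta$ supplies coefficients $\alpha_i\in\{-1,0,1\}=\spm(1_+)$ with $m=\sum_{i=0}^{k-1}\alpha_i 3^i$, and the bound from the first step certifies that this expression respects the mass constraint $\sum_i|\alpha_i 3^i|\le n$, which is precisely what Definition \ref{generator} demands. Hence $F$ is a generating subset and $\theta$ is a bijection, completing the proof.
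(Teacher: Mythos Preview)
Your proof is correct. The point of departure from the paper is the bijectivity step. You reduce to injectivity via the cardinality count $3^k=2n+1$ and then run the standard ``dominant term'' argument on the difference $\gamma_i=\alpha_i-\beta_i\in\{-2,\ldots,2\}$, bounding the tail by $2\sum_{i<j}3^i=3^j-1<3^j$. The paper instead proves bijectivity by induction on $k$: with $m=(3^{k-1}-1)/2$ it partitions $[-n,n]\cap\Z$ into the three translates $J_{-1}=-3^{k-1}+J_0$, $J_0=[-m,m]\cap\Z$, $J_1=3^{k-1}+J_0$, reads off the top coefficient $\alpha_{k-1}$ from which block contains $x$, and appeals to the inductive hypothesis for the remaining digits. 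Your route is shorter and entirely self-contained; the paper's route is more constructive (it is essentially the recursive algorithm for balanced ternary) and its ``cover by three translates of a base interval'' mechanism is reused verbatim in the proof of Proposition~\ref{dimhzn}. So the inductive argument earns a small dividend downstream, but as a proof of the lemma itself your approach is perfectly fine.
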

\proof First of all, note that for all $\alpha\in \{-1,0,1\}^{\times k}$ one has
\[
\sum_{i=0}^{k-1} \vert \alpha_i 3^i\vert \leq \sum_{i=0}^{k-1} 3^i= n.
\]
This shows that for any $\alpha \in \{-1,0,1\}^{\times k}$ one has $z=(\alpha_i 3^i)\in \Vert H\Z\Vert_n(k_+)$, and, as in \eqref{defnsum}, $\Vert H\Z\Vert_n(\sigma)(z)=\theta((\alpha_i))$.
 We prove the bijectivity of $\theta$ by induction on $k$. It is clear for $k=1$. For $k=2$, $n=4$ and one writes the set $[-4,4]\cap \Z$ as the disjoint union of three subsets as follows
\begin{align*}
[-4,4]\cap \Z&=\{-4,-3,-2\}\cup \{-1,0,1\}\cup \{2,3,4\}=\\
&=(-3+\{-1,0,1\})\cup \{-1,0,1\}\cup (3+\{-1,0,1\}).
\end{align*}
This shows  that  $\theta$ is bijective. Next, by assuming to have shown the bijectivity up to $k-1$, we deduce it for $k$. Indeed, with $n=(3^k-1)/2$ let $m=(3^{k-1}-1)/2$. The induction hypothesis ensures that every element of the set $[-m,m]\cap \Z$ can be uniquely written as a sum $\sum_{i=0}^{k-2} \alpha_i 3^i$. We  divide the set $[-n,n]\cap \Z$ into three disjoint parts of the form 
\[
J_{-1}:=[-n,-n+2m]\cap \Z, \quad J_0:=[-m,m]\cap \Z, \quad J_{1}:=[n-2m,n]\cap \Z.
\]
One has $n-2m=m+1$ and thus the union of these three disjoint sets is $[-n,n]\cap \Z$. Moreover one has $J_{1}=(n-m)+J_0$ with $n-m=3^{k-1}$, and similarly $J_{-1}=J_0-3^{k-1}$. Thus, given $x\in [-n,n]\cap \Z$ one determines uniquely the coefficients $\alpha_i\in \{-1,0,1\}$ such that $\theta((\alpha_i))=x$. Indeed,  one uses the above partition in three intervals to determine the coefficient $\alpha_{k-1}$ and then one applies the induction hypothesis to determine the  others. \endproof 
\begin{remark}\label{f3} \begin{enumerate}
\item The conceptual explanation of Lemma \ref{bijectivecase}  derives from the following peculiar property of the Teichm\"uller lift $\tau:\F_3\to \Z_3$. One has $\tau(\F_3)=\{-1,0,1\}\subset \Z\subset \Z_3$ and $\tau$ extends to a canonical map $\tilde \tau :W_k(\F_3)\to \Z$ from Witt vectors to $\Z$  
\[
\xi=(\xi_j)\in W_k(\F_3)\mapsto \tilde \tau(\xi):= \sum_0^{k-1} \tau(\xi_j) 3^j\in \Z.
\]
This map coincides with $\theta$ in \eqref{maptheta} and  one can compute the partially defined sums \eqref{defnsum} in $\Vert H\Z\Vert_n$ using the addition in $W_k(\F_3)$. The prime $p=3$ is the only prime such that the subring $\Z\subset \Z_p=W(\F_p)$ coincides with  the ring of Witt vectors with finitely many non-zero components. When $p>3$ one has $\tau(\F_p)\not\subset \Z $, while for $p=2$ the integer $-1\in \Z\subset \Z_2$ is  the Witt vector  whose components are all equal to $1$. In general the Witt vectors with finitely many non-zero components do not even form a subgroup of the additive group of Witt vectors.
\item The Teichm\"uller lift $\tau$, as a morphism of multiplicative pointed monoids, induces a morphism $\spm\to H\Z$ of $\sss$-algebras.
\item The ordering of the natural numbers encoded by  $\theta$ coincides with the  lexicographic ordering of the coefficients  $(\alpha_i)\in\{-1,0,1\}^{\times k}$ \[ \theta((\alpha_i))< \theta((\beta_i))\iff \exists j ~\text{s.t.}~\alpha_j<\beta_j~~\text{and}~~ \alpha_\ell=\beta_\ell \quad \forall \ell>j.
\]
\end{enumerate}
\end{remark}

We recall that the ceiling function $\lceil x\rceil$ associates to a positive real number $x$ the smallest integer $n\geq x$.

\begin{proposition}\label{dimhzn}
\begin{enumerate}
\item[(i)] For any integer $n \geq 0$ one has 
\begin{equation}\label{dimhzn1}
\dim_{\spm}\Vert H\Z\Vert_n=\bigg\lceil \frac{\log(2n+1)}{\log 3}\bigg\rceil.
\end{equation}
\item[(ii)] For $n\notin \{2,5\}$, there exists a generating subset $F\subset \{1,\ldots,n\}$ of cardinality $\#F=\lceil \frac{\log(2n+1)}{\log 3}\rceil$, with $\sum_{j\in F} j=n$, such that the following map surjects onto $[-n,n]\cap \Z$
\[
	\theta:\{-1,0,1\}^{\times(\#F)}\to [-n,n]\cap \Z, \qquad \theta((\alpha_i)):=\sum_{j\in F} \alpha_j j.
\]
		\end{enumerate}
\end{proposition}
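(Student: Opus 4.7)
My approach treats the upper bound in (i) as a corollary of (ii) outside the exceptional values $n=2,5$, together with direct verification at those two integers. The lower bound in (i) is a pigeonhole count: any generating $F$ induces a surjection $\{-1,0,1\}^{|F|}\twoheadrightarrow [-n,n]\cap\Z$, so $3^{|F|}\ge 2n+1$, whence $|F|\ge \lceil \log(2n+1)/\log 3\rceil$.

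The core of the argument is (ii). I would use the key observation that when $\sum_{j\in F}j=n$ the mass bound $\sum_j|\alpha_j j|\le n$ is automatic for every $(\alpha_j)\in\{-1,0,1\}^F$, reducing linear generation to mere surjectivity of $\theta$ onto $[-n,n]\cap\Z$. For $F=\{a_1<\cdots<a_k\}$, such surjectivity is ensured by the classical overlap criterion $a_1=1$ and $a_i\le 2(a_1+\cdots+a_{i-1})+1$ for $i=2,\ldots,k$, proved by induction on $i$: the three translates of $[-\sigma_{i-1},\sigma_{i-1}]$ by $\{-a_i,0,a_i\}$ cover $[-\sigma_i,\sigma_i]$.

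Set $k=\lceil\log(2n+1)/\log 3\rceil$ and $N=(3^{k-1}-1)/2$, so $n\in(N,3N+1]$; write $m=n-N$. In the \emph{principal case} $m>3^{k-2}$, take $F=\{1,3,\ldots,3^{k-2},m\}$: distinctness is immediate, the sum equals $N+m=n$, and the overlap criterion is inherited from Lemma~\ref{bijectivecase} together with $m\le 2N+1$ (which follows from $n\le 3N+1$). In the \emph{secondary case} $1\le m\le 3^{k-2}$, appending $m$ naively would collide with $3^{k-2}$; instead I would replace $3^{k-2}$ by a pair $(a_{k-1},a_k)$ with $a_{k-1}+a_k=3^{k-2}+m$, $3^{k-3}<a_{k-1}<a_k$, and both satisfying the overlap bounds. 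A direct interval count shows the admissible set for $a_{k-1}$ is $[\max(3^{k-3}+1,\lceil m/3\rceil),\lfloor(3^{k-2}+m-1)/2\rfloor]$, nonempty precisely when $(k,n)\notin\{(2,2),(3,5)\}$.

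The arithmetic reason for the exclusions is that $k$ distinct positive integers sum to at least $k(k+1)/2$, and this lower bound exceeds $n$ exactly at $(k,n)=(2,2)$ and $(3,5)$. For the upper bound in (i) at these two values, I exhibit $F=\{1,2\}$ and $F=\{1,2,4\}$, verifying the mass-plus-representation condition target by target. The main obstacle of the argument lies in the secondary case of (ii): the interval for $a_{k-1}$ degenerates exactly at $m=1$ for small $k$, and confirming uniformly that it remains nonempty for $k\ge 4$ while closing the border cases at $k=2,3$ is the finest point of the construction.
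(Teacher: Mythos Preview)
Your argument is correct and follows the same skeleton as the paper's: the pigeonhole lower bound is identical, and both constructions start from the base set $\{1,3,\dots,3^{k-2}\}$ (with sum $N=(3^{k-1}-1)/2$) and adjoin one further element $m=n-N$ to reach total sum $n$, then repair the collision cases. The routes diverge in the case split and the repair. The paper distinguishes the cases by whether $m$ happens to equal some power $3^\ell$ (the exceptional set $E$ of \eqref{exceptionale}), and when it does, it gives explicit ad hoc replacements: $\{3^\ell-1,3^\ell+1\}$ for the doubled $3^\ell$ when $\ell>0$, and $\{2,3^{m-1}-1\}$ when $\ell=0$. You instead split according to whether $m>3^{k-2}$ or not, and in the secondary case you uniformly replace the top element $3^{k-2}$ by any pair $(a_{k-1},a_k)$ with the right sum lying in a computable interval. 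Your formulation of the overlap criterion $a_i\le 2\sigma_{i-1}+1$ makes the surjectivity verifications mechanical and avoids the paper's separate covering arguments for each subcase. The trade-off is that your secondary case absorbs some values of $n$ (those with $m\le 3^{k-2}$ but $m\notin\{1,3,\dots,3^{k-2}\}$) which the paper handles by the simpler principal construction; on the other hand your repair is a single parametrized family rather than two bespoke fixes. The identification of the exceptional values $n\in\{2,5\}$ via the bound $\sum F\ge k(k+1)/2$ is a nice touch the paper does not make explicit.
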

\proof $(i)$~Set $\kappa(n):=\dim_{\spm}\Vert H\Z\Vert_n$. Since the cardinality of $[-n,n]\cap \Z$ is $2n+1$ and the cardinality of $\{-1,0,1\}^{\times\#F}$ is $3^{\#F}$, it follows that if $F$ is a generating set one must have $3^{\#F}\geq 2n+1$. Thus one has $\#F\geq \frac{\log(2n+1)}{\log 3} $, and since $\#F$ is an integer one gets $\#F\geq \lceil \frac{\log(2n+1)}{\log 3}\rceil $. This shows that $\kappa(n)\geq \lceil \frac{\log(2n+1)}{\log 3}\rceil $. It remains to prove that for each $n\ge 0$ one can find a generating set $F$ with $\# F=\lceil \frac{\log(2n+1)}{\log 3}\rceil $. One uses Lemma \ref{bijectivecase}, and distinguishes several cases starting with the easiest one. Set $F(k):=\{3^i,0\leq i\leq k-1\}$ and let $E\subset \N$ be the subset
\begin{equation}\label{exceptionale}
	E=\bigg\{3^\ell+\frac{1}{2} \left(3^m-1\right)\mid m\in \N, \ell<m \bigg\}.
\end{equation}
We list the first elements of $E$ as follows
\[
2,5,7,14,16,22,41,43,49,67,122,124,130,148,202,365,367,373,391,445,607\ldots
\]
 First, assume  that $n\notin E$. Then let $m$ be the largest integer such that $3^m\leq 2n+1$. If  $3^m= 2n+1$, then  Lemma \ref{bijectivecase} provides one with 
a generating set of cardinality $m=\frac{\log(2n+1)}{\log 3}$. Assume now that $3^m< 2n+1$. Then,  the map $\theta$ for $F(m)$  covers the set $[-q,q]\cap \Z$, where $q=\frac{1}{2} \left(3^m-1\right)$. Thus by adjoining to $F(m)$  the element $n-q$ one  covers $[-n,n]\cap \Z$. More precisely,  let 
$F=F(m)\cup \{n-\frac{1}{2} \left(3^m-1\right)\}$, then we show that $F$ fulfills the conditions in $(ii)$. One has $\# F = \# F(m)+1$ since the additional element $n-\frac{1}{2} \left(3^m-1\right)$ does not belong to $F(m)$ as, by hypothesis, $n\notin E$. Since $\# F(m)=m$, $\#F=m+1=\lceil \frac{\log(2n+1)}{\log 3}\rceil$. The sum of elements in $F(m)$ is $\frac{1}{2} \left(3^m-1\right)$, so the sum of elements in $F$ is equal to $n$. Next, we show that  $\theta$ is surjective. Its image contains the three sets $\theta(F(m))$, $\theta(F(m))+n-\frac{1}{2} \left(3^m-1\right)$, and $\theta(F(m))-(n-\frac{1}{2} \left(3^m-1\right))$.  Using Lemma \ref{bijectivecase} one obtains, with $q=\frac{1}{2} \left(3^m-1\right)$
\begin{align*}
&\theta F(m)=[-q,q]\cap \Z, \quad \theta F(m)+n-q=[n-2q,n]\cap \Z,\\ &\theta F(m)-(n-q)=[-n,-n+2q]\cap \Z.
\end{align*}
One has $3(2q+1)=3^{m+1}>2n+1$. This inequality  prevents the three subsets from being disjoint, thus the upper limit $q$ of $\theta F(m)$ is greater or equal to the lower limit $n-2q$ of the interval $\theta F(m)+n-q$. Thus $\theta$ is surjective. \newline
The next case is for $n\in E$ of the form $n=1+\frac{1}{2} \left(3^m-1\right)$, for some $m$ ($\ell=0$ in \eqref{exceptionale}). We assume that $m>2$ (this excludes the cases $n=2$ for $m=1$, and $n=5$ for $m=2$). Let 
\[
F:=\{3^k\mid 0\leq k\leq m-2\}\cup \{2,3^{m-1}-1\}.
\]
(This choice avoids the repetition of the element $1=n-q$ while keeping the sum of elements of $F$ equal to $n$).
We show that $F$ fulfills the conditions in $(ii)$. We have $\# F=m-1+2=m+1$ since $3^{m-1}-1 \notin \{1,2\}$. Moreover $2n+1=3^m+2$,  thus $\#F=m+1=\lceil \frac{\log(2n+1)}{\log 3}\rceil$. The sum of terms in $F$ is equal to one plus the sum of the powers of $3$ up to $3^{m-1}$ and thus is equal to  $n$. Let us now show that the map $\theta$ is surjective. By construction $\theta(F)$ is the union of  nine intervals obtained, with $q'=\frac{1}{2} (3^{m-1}-1)$, by adding to the set $[-q',q']\cap \Z$ the terms in $\{2,3^{m-1}-1\}$ with coefficients in $\{-1,0,1\}$. With the term $3^{m-1}-1$ one covers the set $[-q+1,q-1]\cap \Z$, where $q=\frac{1}{2} \left(3^{m}-1\right)$ as the union of the three sets
\[
[-q',q']\cap \Z, \ \ ([-q',q']+3^{m-1}-1)\cap \Z, \ \ ([-q',q']-(3^{m-1}-1))\cap \Z.
\] 
One has $q'+3^{m-1}-1=n-2$, and thus it  remains to cover the two elements $n-1,n$ and their opposite. This is done using the set
$([-q',q']+3^{m-1}-1 +2)\cap \Z$ whose upper limit is $n$, and appealing to the fact that since $q'>1$ this interval contains $n-1$.\newline
Finally, we consider the case  $n\in E$  of the form $n=3^\ell+\frac{1}{2} \left(3^m-1\right)$, with $m\in \N, ~0< \ell<m $. Here, the additional element $n-\frac{1}{2} \left(3^m-1\right)$ does  belong to $F(m)$ so the proof of the first case considered above does not apply since $3^\ell$ appears twice in $F(m)\cup \{n-\frac{1}{2} \left(3^m-1\right)\}$. We replace this double occurrence of $3^\ell$ by the two distinct elements $3^\ell\pm 1$. This gives
\[
F=(F(m)\setminus \{3^\ell\})\cup \{3^\ell-1,3^\ell+1\}.
\]
By construction  $\# F(m) +1=m+1$. Furthermore one has:  
$n=3^\ell+\frac{1}{2} \left(3^m-1\right)$, $2n+1=3^m+2\times  3^\ell$ and since $0<\ell<m$ one gets $\#F=m+1=\lceil \frac{\log(2n+1)}{\log 3}\rceil$. The sum of elements of $F$ is the same as the sum of elements of $F(m)$ plus $3^\ell$ which is equal to $n$. It remains to show that, for such $F$,  $\theta$  is surjective. 
We first deal with the set $G(\ell):=F(\ell)\cup \{3^\ell-1,3^\ell+1\}$ and show that the map $\theta_{G(\ell)}$, for this set, surjects onto the interval $[-t(\ell),t(\ell)]\cap \Z$, where $t(\ell)=\sum_{G(\ell)}j=3^\ell+\frac{1}{2}(3^{\ell+1}-1)$. Lemma \ref{bijectivecase} shows that the image of $\theta_{F(\ell)}$ is the set $J\cap \Z$, with $J=[-\frac{1}{2}(3^\ell-1),\frac{1}{2} (3^\ell-1)]$. To show the surjectivity of $\theta_{G(\ell)}$ it is enough to show that the following intervals cover $[0,t(\ell)]$:
\[
J, \quad J+3^\ell-1, \quad J+ 3^\ell+1, \quad J+2\times 3^\ell.
\]
The upper limit of $J+2\times 3^\ell$ is $\frac{1}{2} (3^\ell-1)+2\times 3^\ell=t(\ell)$, its lower limit is equal to the upper limit of $J+ 3^\ell+1$. The lower limit of $J+3^\ell-1$ is $3^\ell-1-\frac{1}{2}(3^\ell-1)= \frac{1}{2}(3^\ell-1)$ which is the upper limit of $J$. Moreover, the length of $J$ is $3^\ell-1\geq 2$ so that two translates of $J$ of the form $J+a$, $J+a +2$ necessarily overlap. This shows that the map $\theta_{G(\ell)}$  surjects onto the interval $[-t(\ell),t(\ell)]\cap \Z$. Thus  $\theta_{G(\ell)}$ is the symmetric enlargement of the interval $\theta(F(\ell+1))$ obtained by adding $3^\ell$ to the upper limit. Then, as in the proof of Lemma \ref{bijectivecase}, one obtains by induction that when one adjoins the higher powers $3^{\ell'}$, where $\ell<\ell'<m$, one achieves the enlargement of the interval $\theta(F(m))$ obtained by adding $3^{\ell}$ to the upper limit. This shows that the map $\theta$ for $F$ is surjective. In fact the above arguments prove $(ii)$ and $(i)$, except that one has to take care of the special cases $n=2,5$. For $n=2$,  $\{1,2\}$ is a generating set but the sum of its terms is $>2$. For $n=5$,   $\{1,2,3\}$ is a generating set but the sum of its terms is $>5$.  \endproof

We are now ready to determine the dimension of the $\spm$-module $H^0(D)$, for any Arakelov divisor $D$. It is a non-decreasing function of $\deg D$ but the formula for this dimension drops by $1$ on an exceptional set $L$.   It is the open set defined as the union of open intervals as follows 
\begin{equation}\label{excL}
	L+ \log 2=\bigcup_{k\in \N} (k\log 3,k\log 3+\epsilon_k), \ \ \epsilon_k:=\log(1+3^{-k}).
\end{equation}
It has finite Lebesgue measure
\[
 \vert L\vert=\sum_{k\geq 0}  \epsilon_k=\log \left(-1;\frac{1}{3}\right)_{\infty }= 1.14099\ldots
\]
where $\left(-1;\frac{1}{3}\right)_{\infty }=\prod_{n=0}^\infty (1+3^{-n})$ is the $-1$ Pochhammer symbol at $\frac 13$.

\begin{theorem} \label{mainth}
Let $D$ be an Arakelov divisor  on $\spzb$. If $\deg D\geq -\log 2$, then
\begin{equation}\label{rrforq}
\dim_{\spm}H^0(D)=\bigg\lceil \frac{\deg D+ \log 2}{\log 3}\bigg\rceil	-{\mathbf 1}_L
\end{equation}
where ${\mathbf 1}_L$ is the characteristic function of the open set $L$. 
\end{theorem}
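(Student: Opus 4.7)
The plan is to reduce the theorem to the archimedean case already treated by Proposition~\ref{dimhzn}, and then translate the formula $\lceil \log(2n+1)/\log 3 \rceil$ into the claimed expression in terms of $\deg D$. All of the substance resides in a final comparison of two ceiling functions.

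First, by Proposition~\ref{setup1}(iii) the $\spm$-module $H^0(D)$ depends only on the linear equivalence class of $D=\sum_j a_j\{p_j\}+a\{\infty\}$. Multiplying $D$ by the rational $\prod_j p_j^{a_j}\in\Q^\times$ clears the finite part and shifts the archimedean component by $\sum_j a_j\log p_j$, so one may assume $D=(\deg D)\{\infty\}$. For such a divisor Proposition~\ref{setup1}(i) gives $H^0(D)=\iota^{-1}H\Q=\Vert H\Z\Vert_{e^a}$ with $a:=\deg D$, and since the defining condition $|n|\leq e^a$ for $n\in\Z$ only depends on the integer part, one may replace $e^a$ by $n:=\lfloor e^a\rfloor$. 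The assumption $\deg D\geq-\log 2$ guarantees $n\geq 0$, and Proposition~\ref{dimhzn}(i) yields
\begin{equation*}
\dim_{\spm}H^0(D)\;=\;\bigg\lceil\frac{\log(2n+1)}{\log 3}\bigg\rceil.
\end{equation*}

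It remains to verify that this integer differs from $\lceil(a+\log 2)/\log 3\rceil$ precisely by $\mathbf{1}_L(a)$. Set $m:=\lceil\log(2n+1)/\log 3\rceil$ and $m':=\lceil(a+\log 2)/\log 3\rceil$, so that $m$ is characterized by $(3^{m-1}-1)/2<n\leq(3^m-1)/2$ and $m'$ by $3^{m'-1}/2<e^a\leq 3^{m'}/2$. When $n\leq(3^m-3)/2$, the entire interval $[n,n+1)$ lies inside $(3^{m-1}/2,3^m/2]$, forcing $m'=m$. The only case that can produce a discrepancy is therefore $n=(3^m-1)/2$: here $e^a\in[(3^m-1)/2,(3^m+1)/2)$, and $m'=m$ precisely when $e^a\leq 3^m/2$, while $m'=m+1$ when $e^a\in(3^m/2,(3^m+1)/2)$. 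The latter condition rewrites as $a+\log 2\in(m\log 3,\,\log(3^m+1))=(m\log 3,\,m\log 3+\epsilon_m)$, which is exactly the $k=m$ piece of $L+\log 2$ in \eqref{excL}. Hence $m'-m=\mathbf{1}_L(a)$ and the formula follows.

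The main obstacle is this last paragraph: one must verify that the finite set of values of $n$ at which the ceiling $\lceil\log(2n+1)/\log 3\rceil$ attains its smallest possible integer value, namely $n=(3^m-1)/2$, corresponds via $n=\lfloor e^a\rfloor$ to precisely those half-intervals $(3^m/2,(3^m+1)/2)$ defining $L$. The bookkeeping is routine once framed around the thresholds $3^m/2$, but it requires careful attention to the half-open endpoints and to the asymmetry between the integer $n$ and the real $e^a$.
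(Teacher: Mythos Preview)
Your proof is correct and follows essentially the same route as the paper's: reduce to $D=a\{\infty\}$, invoke Proposition~\ref{dimhzn} with $n=\lfloor e^a\rfloor$, and then compare $\lceil\log(2n+1)/\log 3\rceil$ with $\lceil(a+\log 2)/\log 3\rceil$. The only organizational difference is that the paper splits the final comparison on whether $a\in L$ or $a\notin L$ and computes both ceilings in each case, whereas you split on whether $n$ sits at the top of its range $n=(3^m-1)/2$ and then on whether $e^a$ exceeds $3^m/2$; the two case analyses are mirror images of one another.
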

\proof Let $a=\deg D$ and  $n=\lfloor\exp(\deg(D)\rfloor$, then  $H^0(D)=\Vert H\Z\Vert_n$.  By Proposition \ref{dimhzn} one gets 
\[
\dim_{\spm}H^0(D)=\bigg\lceil \frac{\log(2n+1)}{\log 3}\bigg\rceil, \qquad n:=\lfloor\exp(a)\rfloor.
\]
Let us compare this formula with \eqref{rrforq}. Assume first that $a\in L$, thus there exists $k\in\N$ such that $a+\log 2\in (k\log 3,k\log 3+\epsilon_k)$. Then, $2 \exp a\in (3^k, 3^k+1) $ and with $n=\lfloor \exp a\rfloor$,  one has $n=\frac{1}{2}(3^k-1)$. Thus  $2n+1=3^k$ and 
 $\lceil \frac{\log(2n+1)}{\log 3}\rceil=k$. Moreover,  $\frac{a+\log 2}{\log 3}\in (k,k+\frac{\epsilon_k}{\log 3})$, where $0<\frac{\epsilon_k}{\log 3}<1$, hence one has $\lceil \frac{a+ \log 2}{\log 3}\rceil=k+1$, so that \eqref{rrforq} holds since ${\mathbf 1}_L(a)=1$. Now, assume that $a\notin L$,  equivalently that $\frac{a+\log 2}{\log 3}\in [k+\frac{\epsilon_k}{\log 3},k+1]$ for some integer $k\geq 0$ (since $a\geq- \log 2$ by hypothesis). Then  $\lceil \frac{a+ \log 2}{\log 3}\rceil=k+1$. One has 
 \[
 2 \exp a\in [\exp(k\log 3)(1+3^{-k}),\exp((k+1)\log 3)]=[3^k+1,3^{k+1}].
 \]
 In this case $n=\lfloor\exp a\rfloor\in[\frac{1}{2} (3^k+1),\frac{1}{2}(3^{k+1}-1)]$ and thus $2n+1\in [3^k+2,3^{k+1}]$.
 This implies that $\frac{\log(2n+1)}{\log 3}\in (k,k+1]$ and hence that  $\lceil \frac{\log(2n+1)}{\log 3}\rceil=k+1$. In this case \eqref{rrforq} holds since ${\mathbf 1}_L(a)=0$. \endproof 

\section{The dimension of $H^1(D)$}\label{sec4}
 
 Let $(A,d)$ be an abelian group endowed with a translation invariant metric $d$. For $\lambda\in\R_{>0}$, we shall refer to $(A,d)_\lambda$ as the associated object  of the category $\Gamma\cT_*$ as in Proposition \ref{defnA}. The $\sss$-module structure determines the addition in $A=(A,d)_\lambda(1_+)$, as well as the action of $\spm$ on it, where for $x\in A=(A,d)_\lambda(1_+)$, $-x$ is the additive inverse of $x$. The metric $d$ determines the tolerance relation $\cR$ on $A=(A,d)_\lambda(1_+)$.
 Next result computes the dimension (Definition \ref{generator}) of the tolerant  $\spm$-module $(\R/\Z,d)_\lambda$.

 \begin{proposition}\label{caseh1}
 Let $\lambda\in\R_{>0}$, and  $U(1)_\lambda=(U(1),d)_\lambda$ where $U(1)$ is the abelian group $\R/\Z$ endowed with the canonical metric $d$ of length $1$.	Then 
 
 \begin{equation}\label{dimh1}
\dim_{\spm}U(1)_\lambda=\begin{cases}\bigg\lceil \frac{-\log \lambda- \log 2}{\log 3}\bigg\rceil &\text{if $\lambda< \frac 12$}\\0 &\text{if $\lambda\geq \frac 12$.}	
\end{cases}
\end{equation}
 \end{proposition}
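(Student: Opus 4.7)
My plan is to split the argument along the two cases of the statement. When $\lambda\geq \tfrac12$, I note that the length-$1$ metric on $\R/\Z$ satisfies $d(x,0)\leq \tfrac12\leq \lambda$ for every $x\in\R/\Z$, so the empty sum (equal to $0\in U(1)$) is within $\lambda$ of every element. Hence by Definition~\ref{generator} the empty set is a generating set of $U(1)_\lambda$, yielding $\dim_{\spm}U(1)_\lambda=0$.

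When $\lambda<\tfrac12$, set $k_0:=\lceil(-\log\lambda-\log 2)/\log 3\rceil$; I would prove the matching lower and upper bounds on $\dim_{\spm}U(1)_\lambda$ separately. For the lower bound, I would take any generating set $F$ of cardinality $k$ and note that the set of admissible sums $\{\sum_{f\in F}\alpha_f f : \alpha_f\in\{-1,0,1\}\}\subset \R/\Z$ has at most $3^k$ elements; by the generating property combined with \eqref{tolerrelat}, the union of their closed $\lambda$-balls covers $\R/\Z$. A Haar measure comparison then gives $3^k\cdot 2\lambda\geq 1$, so $k\geq(-\log\lambda-\log 2)/\log 3$, and integrality forces $k\geq k_0$. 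For the matching upper bound I would exhibit an explicit generating set of size $k_0$, namely $F:=\{3^{-i}\bmod 1 : 1\leq i\leq k_0\}\subset U(1)$. Using the identity $\sum_{i=1}^{k_0}\alpha_i 3^{-i}=3^{-k_0}\sum_{j=0}^{k_0-1}\alpha_{k_0-j}\,3^{j}$ together with the bijectivity part of Lemma~\ref{bijectivecase}, the set of admissible sums coincides with $\{3^{-k_0}m : m\in\Z,\;|m|\leq(3^{k_0}-1)/2\}\subset U(1)$. These are $3^{k_0}$ equally spaced points on $\R/\Z$ with spacing $3^{-k_0}$, and the choice of $k_0$ yields $3^{-k_0}\leq 2\lambda$; hence the closed $\lambda$-balls about these points cover $\R/\Z$, so $F$ generates.

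The one delicate point, which will be the main place to be careful, is checking that these $3^{k_0}$ combinations really form an equally spaced orbit on the circle, i.e.\ that the wrap-around arc through $\tfrac12$ has the same length $3^{-k_0}$ as the interior gaps. This will follow from the elementary identity $1-(3^{k_0}-1)\cdot 3^{-k_0}=3^{-k_0}$ together with the symmetry $\theta(-\alpha)=-\theta(\alpha)$ of Lemma~\ref{bijectivecase}. With this observation in hand, the entire argument reduces to transplanting the balanced-ternary bijection of that lemma from the integer interval $[-(3^{k_0}-1)/2,(3^{k_0}-1)/2]\cap\Z$ to the circle $U(1)$ via the scaling $m\mapsto 3^{-k_0}m$, paired with the volume-covering lower bound.
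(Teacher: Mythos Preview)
Your argument is correct and follows essentially the same route as the paper's: the Haar-measure covering inequality $3^k\cdot 2\lambda\geq 1$ for the lower bound, and the explicit set $F=\{3^{-1},\ldots,3^{-k_0}\}$ analysed via Lemma~\ref{bijectivecase} for the upper bound (the paper treats the integer and non-integer cases of $(-\log\lambda-\log 2)/\log 3$ separately, whereas you handle them uniformly, but the content is the same).

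The one omission worth flagging is that Definition~\ref{generator} has \emph{two} clauses, and you verify only the covering clause~2. The paper checks clause~1 explicitly: distinct elements $x\neq y$ of $F$ must satisfy $(x,y)\notin\cR$, i.e.\ $d(x,y)>\lambda$. The minimal pairwise distance in $F$ is $d(3^{-k_0},3\cdot 3^{-k_0})=2\cdot 3^{-k_0}$, and from $k_0-1<(-\log\lambda-\log 2)/\log 3$ one obtains $3^{k_0}<3/(2\lambda)$, hence $2\cdot 3^{-k_0}>\tfrac{4}{3}\lambda>\lambda$. This is straightforward, but without it your $F$ is not yet established as a generating set in the sense of the definition.
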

\proof For $\lambda\geq \frac 12$, any element of $U(1)_\lambda=(\R/\Z,d)_\lambda$ is at distance $\leq \lambda$ from $0$, thus one can take $F=\emptyset$ as generating set since, by convention, $\sum_\emptyset=0$. Thus $\dim_{\spm}U(1)_\lambda=0$. Next, we assume $\lambda<\frac 12$. Let $F\subset U(1)$ be a generating set and let $k=\# F$. One easily sees that there are at most $3^k$ elements of the form $\sum_F \alpha_j j$. The subsets  $\{x\in U(1) \mid d(x,\alpha_j j)\leq \lambda\}$  cover $U(1)$, and since each of them has measure $2\lambda$ one gets the inequality $2 \lambda\cdot  3^k\geq 1$. Thus  $k\geq \frac{-\log \lambda- \log 2}{\log 3}$. For $\lambda=\frac 16$ one has $k\geq 1$ and the subset  $F=\{\frac 13\}$ generates, thus $\dim_{\spm}U(1)_\lambda=1$.  When $\frac{-\log \lambda- \log 2}{\log 3}=m$ is an integer, one has $\lambda=\frac 12 3^{-m}$. Let $F(m)=\{\frac 13,\ldots,\frac{1}{3^{m}}\}$. The minimal distance between two elements of $F(m)$ is the distance between $3^{-m}$ and $3^{-m+1}= 3\cdot  3^{-m}$ which is $2 \cdot 3^{-m}=4 \lambda$.
Let us show that $F(m)$ is a generating set. By Lemma \ref{bijectivecase} any integer $q$ in the interval $[-N,N]$, for $N=\frac 12 (3^m-1)$ can be written as  $q=\sum_{i=0}^{m-1} \alpha_i 3^i$, with $\alpha_i\in\{-1,0,1\}$. One then gets 
\[
q \cdot 3^{-m}=\sum_{i=0}^{m-1} \alpha_i 3^{i-m}=\sum_{j=1}^{m}\alpha_{m-j}3^{-j}.
\]
 Let $x\in [-\frac 12,\frac 12]$, then $3^m  x\in [-\frac {3^m}{2},\frac {3^m}{2}]$ and there exists an integer $q\in [-N,N]$ such that $ \vert 3^m  x-q\vert \leq \frac 12$. 
 Hence $d(x,q \cdot 3^{-m})\leq \lambda$. This proves that $F(m)$ is a generating set (see Definition \ref{generator}) and one derives $\dim_{\spm}U(1)_\lambda=m$. Assume now that $\frac{-\log \lambda- \log 2}{\log 3}\in(m,m+1)$, where $m$ is an integer. For any generating set $F$ of cardinality $k$ one has $k\geq \frac{-\log \lambda- \log 2}{\log 3}>m$ so that $k\geq m+1$. The subset $F(m+1)=\{\frac 13,\ldots,\frac{1}{3^{m+1}}\}$ fulfills the first condition of Definition \ref{generator} since the minimal distance between two elements of  $F(m+1)$ is $2 \cdot 3^{-m-1}$ which is larger than $\lambda< \frac 12 3^{-m}$. As shown above, the subset $F(m+1)$ is generating for $\lambda=\frac 12 3^{-m-1}$ and a fortiori for $\lambda>\frac 12 3^{-m-1}$ (as by assumption  
$\frac{-\log \lambda- \log 2}{\log 3}<m+1$). Thus one obtains $\dim_{\spm}U(1)_\lambda=m+1$ and  \eqref{dimh1} is proven.\endproof 
\begin{remark}\label{triadic}
The proof of Proposition \ref{caseh1} relies on  a definite advantage of the triadic expansion of numbers \cite{triadic}:  truncating a number is identical to rounding it. This property does not hold in the decimal system where rounding a number requires the knowledge of the next digit.
\end{remark}

We extend the ceiling function to negative values of the variable as follows
 \begin{equation}\label{ceilingprime}
\lceil x \rceil'	=\begin{cases} \lceil x \rceil\ \text{if}\ x\geq 0\\ 	-\lceil -x \rceil\ \text{if}\ x\leq  0.\end{cases}
\end{equation}
 
 We can now state and prove the absolute Riemann-Roch theorem for $\spzb$  over $\spm$. With $L$ the exceptional set defined in \eqref{excL} one has
 
 \begin{theorem}\label{rrspzb} 
Let $D$ be an Arakelov divisor  on $\spzb$. Then 
\begin{equation}\label{rrforq1}
\dim_{\spm}H^0(D)-\dim_{\spm}H^1(D)=\bigg\lceil \frac{\deg D+ \log 2}{\log 3}\bigg\rceil'	-{\mathbf 1}_L.
\end{equation}
\end{theorem}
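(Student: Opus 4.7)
My plan is to reduce, via the linear-equivalence invariance of Proposition~\ref{setup1}(iii), to the case of the purely archimedean divisor $D=a\{\infty\}$, and then split the analysis into the ranges $a\geq -\log 2$ and $a<-\log 2$. Both cases hinge on the same intermediate identification: the cokernel $H^1(D)$ of $\psi$ coincides, as a tolerant $\spm$-module, with the object $U(1)_{e^a}$ computed in Proposition~\ref{caseh1}.

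For that intermediate step I would invoke strong approximation in the form $\A_\Q=\Q+(\widehat{\Z}\times \R)$ with $\Q\cap(\widehat{\Z}\times \R)=\Z$. At the level $1_+$, the image of $\psi$ is $\Q+(\widehat{\Z}\times [-e^a,e^a])$, so the tolerance relation $\cR_1$ on $H\A_\Q(1_+)=\A_\Q$ descends to the relation ``$d(x,y)\leq e^a$'' on $\R/\Z$. Promoting this identification functorially in $\gop$ and checking compatibility with the $\spm$-action yields an isomorphism $H^1(a\{\infty\})\simeq U(1)_{e^a}$ of tolerant $\spm$-modules, and in particular the equality of the corresponding dimensions in the sense of Definition~\ref{generator}.

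With this in hand the two cases become short. When $a\geq -\log 2$ one has $e^a\geq 1/2$, so Proposition~\ref{caseh1} gives $\dim_{\spm}H^1(D)=0$, while Theorem~\ref{mainth} supplies
\[
\dim_{\spm}H^0(D)=\bigg\lceil \frac{a+\log 2}{\log 3}\bigg\rceil -\mathbf{1}_L,
\]
and since $a+\log 2\geq 0$ the unprimed and primed ceilings coincide, so \eqref{rrforq1} follows. When $a<-\log 2$, $e^a<1$ forces $H^0(D)=\Vert H\Z\Vert_0=\{*\}$, hence $\dim_{\spm}H^0(D)=0$, while Proposition~\ref{caseh1} yields $\dim_{\spm}H^1(D)=\lceil (-a-\log 2)/\log 3\rceil$, which by \eqref{ceilingprime} equals $-\lceil (a+\log 2)/\log 3\rceil'$. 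Because $L+\log 2\subset \R_{>0}$, $\mathbf{1}_L(a)=0$ in this regime, and \eqref{rrforq1} holds once again.

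The main obstacle I anticipate is the categorical identification of $H^1(a\{\infty\})$ with $U(1)_{e^a}$ at the level of tolerant $\spm$-modules, and in particular the verification that linear generators in the sense of Definition~\ref{generator} are preserved by this identification. Once the compatibility with the $\gop$-functoriality of $H^1(D)$ is in place, the remainder of the argument is merely bookkeeping between the two ceiling conventions and tracking the ranges where the exceptional set $L$ contributes.
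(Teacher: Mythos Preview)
Your approach is the same as the paper's: reduce to $D=a\{\infty\}$ via Proposition~\ref{setup1}(iii), identify the dimension of $H^1(D)$ with that of $U(1)_{e^a}$, and then split into the ranges $a\geq -\log 2$ and $a<-\log 2$, invoking Theorem~\ref{mainth} and Proposition~\ref{caseh1} respectively. The bookkeeping with $\lceil\cdot\rceil'$ and the observation that $L$ lies above $-\log 2$ are also exactly as in the paper.

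The one point that needs sharpening is the claimed \emph{isomorphism} $H^1(a\{\infty\})\simeq U(1)_{e^a}$ of tolerant $\spm$-modules. This cannot hold as stated: by Proposition~\ref{setup1}(ii) the underlying $\sss$-module of $H^1(D)$ is $H\A_\Q$, not $HU(1)$, so no levelwise bijection exists. What the paper establishes instead (Proposition~\ref{thedimh1}, whose proof is precisely your strong-approximation computation) is that $H^1(D)=\pi^*\bigl((\R/\Z,d)_{e^a}\bigr)$ is the \emph{pullback} of $U(1)_{e^a}$ along the surjection $\pi:\A_\Q\to\R/\Z$; the equality of dimensions then follows from Proposition~\ref{pullback}(ii), which shows that $\dim_{\spm}$ is invariant under pullback by a surjective group homomorphism. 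Once you replace ``isomorphism'' by ``pullback plus Proposition~\ref{pullback}(ii)'', your argument is complete and coincides with the paper's.
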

 
 \proof  By appealing to the invariance under linear equivalence (Proposition \ref{setup1} $(iii)$), one may assume that $D=a\{\infty\}$. Then it follows from Proposition \ref{thedimh1} $(ii)$ that $\dim_{\spm}H^1(D)=\dim_{\spm}U(1)_\lambda$ for $\lambda=\exp(\deg(D))$. Assume first that $\exp(\deg(D))\geq \frac 12$, then by Proposition \ref{caseh1}: $\dim_{\spm}H^1(D)=0$, thus for $\deg(D)\geq -\log 2$ \eqref{rrforq1} follows from \eqref{rrforq}. Let us now assume that $\deg(D)< -\log 2$. Then $\dim_{\spm}H^0(D)=0$ since $H^0(D)=\{\ast\}$  when $\deg D<0$. Moreover  $\deg D\notin L$ since $L$ is lower bounded by $-\log 2$. Thus \eqref{rrforq1} follows from \eqref{dimh1} which gives, using \eqref{ceilingprime}
 \[
 -\dim_{\spm}H^1(D)=-\dim_{\spm}U(1)_\lambda=-\bigg\lceil \frac{-\log \lambda- \log 2}{\log 3}\bigg\rceil=\bigg\lceil \frac{\deg D+ \log 2}{\log 3}\bigg\rceil'.	
 \]
 This ends the proof of \eqref{rrforq1}. \endproof
 
\section{Duality}\label{sec5}

In this section we prove an absolute analogue of  Serre's duality, namely the following isomorphism of $\spm$-modules, for any divisor $D$ on $\spzb$: 
\[
H^0(D)\simeq \uhom_{\Gamma\cT_*}(H^1(K-D),U(1)_{\frac 14}).
\]
Here, the divisor $K=-2\{2\}$ plays the role of the canonical divisor. The choice of the  tolerant $\spm$-module $(U(1),d)_{\frac 14}$ as dualizing module is motivated by  Pontryagin duality (see \ref{sectpontdual}). One has $\dim_{\spm}(U(1)_{\frac 14})=1$. This equality, in fact,  also holds for the tolerant $\spm$-module $U(1)_{\lambda} $ for $\frac 16\leq \lambda<\frac 12$ :  the specific choice $\lambda=\frac 14$ is dictated by the invariance of the Riemann-Roch formula \eqref{rrforq1} when one switches $H^0$ and $H^1$ and replaces $D$ by $K-D$ (ignoring the exceptional set $L$).

 \subsection{$\uhom_\sss(\Vert H\R\Vert_\lambda,\Vert H\R\Vert_\mu)$}
 
 We start with the following general statement.  
 \begin{lemma}\label{homhom} Let $\lambda, \mu\in\R_{>0}$. The $\sss$-algebra structure of $H\R$ induces an isomorphism of $\sss$-modules
  	\begin{equation}\label{homhom1}
\Vert H\R\Vert_{\mu/\lambda}\simeq \uhom_\sss(\Vert H\R\Vert_\lambda,\Vert H\R\Vert_\mu).
\end{equation}
  \end{lemma}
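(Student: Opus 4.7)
The plan is to construct the isomorphism \eqref{homhom1} explicitly using the $\sss$-algebra structure of $H\R$. Ordinary multiplication on $\R$ induces a morphism of $\sss$-modules $m\colon H\R\wedge H\R\to H\R$, and the elementary inequality $\sum_{i,j}\vert t_i x_j\vert\le\bigl(\sum_i\vert t_i\vert\bigr)\bigl(\sum_j\vert x_j\vert\bigr)$ shows that $m$ restricts to a map $\Vert H\R\Vert_{\mu/\lambda}\wedge \Vert H\R\Vert_\lambda\to \Vert H\R\Vert_\mu$. Via the tensor--hom adjunction in $\sss$-modules, this yields a natural map $\Psi\colon\Vert H\R\Vert_{\mu/\lambda}\to \uhom_\sss(\Vert H\R\Vert_\lambda,\Vert H\R\Vert_\mu)$, concretely sending $t\in\Vert H\R\Vert_{\mu/\lambda}(1_+)=[-\mu/\lambda,\mu/\lambda]$ to the morphism ``multiplication by $t$''.

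The heart of the argument is showing $\Psi$ is a bijection on $1_+$-sections, i.e.\ that every $\sss$-module morphism $f\colon\Vert H\R\Vert_\lambda\to\Vert H\R\Vert_\mu$ arises uniquely from scalar multiplication by some $t$ with $\vert t\vert\le\mu/\lambda$. Such an $f$ is determined by its value $f_1\colon[-\lambda,\lambda]\to[-\mu,\mu]$ on $1_+$-sections, with $f_1(0)=0$. Naturality of $f$ with respect to the maps $\sigma$ and $\delta(j,k)$ of \eqref{partmaps}, combined with the definition of partial sums in \eqref{defnsum}, forces
\[
f_1\Bigl(\sum_j x_j\Bigr)=\sum_j f_1(x_j)\quad\text{whenever}\quad\sum_j\vert x_j\vert\le\lambda.
\]
This is Cauchy's additive functional equation on a neighborhood of zero. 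Together with the a priori bound $\vert f_1(x)\vert\le\mu$ inherited from the codomain constraint, the classical Cauchy argument (bounded additive functions on an interval are $\R$-linear) forces $f_1(x)=tx$ for a unique $t\in\R$; the requirement $\vert f_1(\pm\lambda)\vert\le\mu$ then yields $\vert t\vert\le\mu/\lambda$, so that $t\in\Vert H\R\Vert_{\mu/\lambda}(1_+)$.

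The final step extends this from $1_+$ to arbitrary $k_+$ by the same reasoning applied sectionwise: elements of $\uhom_\sss(\Vert H\R\Vert_\lambda,\Vert H\R\Vert_\mu)(k_+)$ correspond to $k$-tuples $(t_1,\ldots,t_k)$ of scalars, where the requirement that the associated pairing $k_+\wedge\Vert H\R\Vert_\lambda\to\Vert H\R\Vert_\mu$ preserve mass translates, via the same multiplicative inequality used to define $\Psi$, into the joint constraint $\sum_j\vert t_j\vert\le\mu/\lambda$, i.e.\ $(t_j)\in\Vert H\R\Vert_{\mu/\lambda}(k_+)$. The main obstacle I anticipate is the bookkeeping around the internal $\uhom$ convention and verifying naturality of $\Psi$ in $k_+$; once that is in place, the analytic content is the classical Cauchy linearity lemma on a bounded interval, where boundedness is automatic from the mass constraint on the codomain.
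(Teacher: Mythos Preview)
Your proposal is correct and follows essentially the same route as the paper's proof: construct the map from the multiplicative structure of $H\R$, identify a morphism $\Vert H\R\Vert_\lambda\to\Vert H\R\Vert_\mu$ with a map on $1_+$-sections satisfying the restricted Cauchy equation via naturality under $\sigma$ and $\delta(j,k)$, use boundedness to force linearity, and then pass to $k_+$-sections to get the constraint $\sum_j\vert t_j\vert\le\mu/\lambda$. The paper spells out the Cauchy step explicitly (showing $f(2^{-n}x)=2^{-n}f(x)$, extending to an additive map $\tilde f:\R\to\R$, and observing continuity from the mass bound) rather than citing the classical result as a black box, but the content is the same.
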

\proof One starts by defining the morphism of $\sss$-modules 
\begin{equation}\label{etamap}
 	\eta: \Vert H\R\Vert_{\mu/\lambda}\to \uhom_\sss(\Vert H\R\Vert_\lambda,\Vert H\R\Vert_\mu).
 \end{equation}
Precisely, the multiplication in the $\sss$-algebra  $H\R$  determines  natural maps 
\[
m:H\R(X)\wedge H\R(Y)\to H\R(X\wedge Y)
\]
inducing, for a fixed pointed set $X$, the map $m_X: H\R(X)\to \Hom_\sss( H\R,H\R( X\wedge -))$.
The morphism  $\eta$ is defined as the natural transformation of functors which associates to  $X=k_+$, the map  $\eta_X:\Vert H\R\Vert_{\mu/\lambda}(X)\to \Hom_\sss(\Vert H\R\Vert_\lambda,\Vert H\R\Vert_\mu( X\wedge -))$ defined as the restriction of $m_X$. This restriction is meaningful in view of  \cite{CCprel} (Proposition 6.1).\newline
Next, we show that $\eta$ is an isomorphism. First, we determine the $\sss$-module $\Hom_\sss(\Vert H\R\Vert_\lambda,\Vert H\R\Vert_\mu)$. For integers $1\leq j\leq k$ we let as in \eqref{partmaps}
\[
\delta(j,k)\in \Hom_\gop(k_+,1_+), \quad \delta(j,k)(\ell):=\begin{cases} 1 &\text{if $\ell=j$}\\ * &\text{if $\ell\neq j$}.	
\end{cases}
\]
 Let $\phi\in \Hom_\sss(\Vert H\R\Vert_\lambda,\Vert H\R\Vert_\mu)$. By construction, the natural transformation $\phi$ reads, for each $k$, as a map $\phi(k_+):\Vert H\R\Vert_\lambda(k_+)\to \Vert H\R\Vert_\mu(k_+)$ and by naturality we have
 \begin{equation}\label{etamap1}
\Vert H\R\Vert_\mu(\delta(j,k))\circ \phi(k_+)=\phi(1_+)\circ \Vert H\R\Vert_\lambda(\delta(j,k)).
\end{equation}
 Since an element  $y\in\Vert H\R\Vert_\mu(k_+)$ is determined by its components $y_j\in \R$, $1\leq j\leq k$,
with $y_j=\Vert H\R\Vert_\mu(\delta(j,k))(y)$, \eqref{etamap1} shows that  $\phi(k_+)$ is uniquely determined by the map  $\phi(1_+)$ acting componentwise, \ie 
\begin{equation}\label{etamap2}
	\phi(k_+)((x_j))=\left(\phi(1_+)(x_j)\right).
\end{equation}
Moreover, the map $f=\phi(1_+):[-\lambda,\lambda]\to [-\mu, \mu]$  fulfills 
\begin{equation}\label{etamap3}
	f(x+y)=f(x)+f(y)\quad\forall x,y\  s. t. \ \vert x\vert +\vert y\vert \leq \lambda, 
\end{equation}
as one sees using the naturality of $\phi$ for the map $\sigma\in \Hom_\gop(2_+,1_+)$, \ie using 
\[
(\Vert H\R\Vert_\mu(\sigma)\circ \phi(2_+))(x,y)=(\phi(1_+)\circ \Vert H\R\Vert_\lambda(\sigma))(x,y)\Longrightarrow f(x)+f(y)=f(x+y).
\]
By \eqref{etamap3} one has $f(x 2^{-n})= 2^{-n} f(x)$, for any $x\in [-\lambda,\lambda]$ and $n\geq 0$. Thus the `germ of map' $f$ uniquely extends to a map $\tilde f:\R\to \R$ defined by $\tilde f(x):=2^n f(x 2^{-n})$ for any $n$ such that $\vert x 2^{-n}\vert \leq \lambda$. Moreover, again by \eqref{etamap3}, the map $\tilde f$ is additive and since $\tilde f([-2^{-n}\lambda,2^{-n}\lambda])\subset [-2^{-n}\mu,2^{-n}\mu]$, it is also continuous and hence determined by the multiplication by a real number $r$. One has $\vert r \lambda \vert \leq \mu$ and hence $r\in \Vert H\R\Vert_{\mu/\lambda}(1_+)$. Thus one gets $\phi(1_+)(x)=r x$, $\forall x \in [-\lambda,\lambda]$. By  \eqref{etamap2} one obtains $\phi(k_+)((x_j))=(r x_j)$, $\forall (x_j)\in \Vert H\R\Vert_\lambda(k_+)$. This shows that for $X=1_+$, $\eta_X:\Vert H\R\Vert_{\mu/\lambda}(X)\to \Hom_\sss(\Vert H\R\Vert_\lambda,\Vert H\R\Vert_\mu( X\wedge -))$ is bijective. The next step is to determine $\Hom_\sss(\Vert H\R\Vert_\lambda,\Vert H\R\Vert_\mu( X\wedge -))$ for $X=\ell_+$. Let $\phi \in \Hom_\sss(\Vert H\R\Vert_\lambda,\Vert H\R\Vert_\mu( \ell_+\wedge -))$. One has $\ell_+\wedge k_+=(\ell k)_+$ and an element 
$z\in \Vert H\R\Vert_\mu( \ell_+\wedge k_+)$ is determined by its components $z(i,j)\in \R$ for $1\leq i\leq \ell$, $1\leq j\leq k$ such that $\sum \vert z(i,j)\vert \leq \mu$. In particular, for each $j$, the $z(i,j)$, $1\leq i\leq \ell$, are the components of $z_j=\Vert H\R\Vert_\mu( \id\wedge \delta(j,k)))(z)\in \Vert H\R\Vert_\mu(\ell_+)$. This implies by applying the naturality of $\phi$,  that  
\begin{equation}\label{etamap4}
	\phi(k_+)(z)=\left(\phi(1_+)(z_j)\right).
\end{equation}
As shown above the map $f=\phi(1_+):[-\lambda,\lambda]\to \Vert H\R\Vert_\mu(\ell_+)$  fulfills 
\begin{equation}\label{etamap5}
	f(x+y)=f(x)+f(y)\quad\forall x,y\  s. t. \ \vert x\vert +\vert y\vert \leq \lambda 
\end{equation}
and it extends to an additive map $\tilde f: \R \to \R^\ell$ which is continuous since $\tilde f$ maps the interval $[-\lambda,\lambda]$ inside $\Vert H\R\Vert_\mu(\ell_+)$. Thus there exists 
real numbers  $r_i$, $1\leq i\leq \ell$, such that $\tilde f(x)=(r_i x)$ $\forall x\in \R$. One has $\sum \vert r_i \lambda  \vert \leq \mu$ and thus $r=(r_i)\in \Vert H\R\Vert_{\mu/\lambda}(\ell_+)$. Finally, using \eqref{etamap4} it follows that $\phi=\eta_X(r)$. This proves that  $\eta$ as in \eqref{etamap} is an isomorphism.\endproof

\subsection{Pontryagin duality}\label{sectpontdual}

In order to formulate Pontryagin duality in this context we consider, for $\lambda>0$,  the $\spm$-module  $U(1)_\lambda=(U(1),d)_\lambda$, where $U(1)$ is the abelian group $\R/\Z$ endowed with its canonical metric $d$ of length $1$. For a metric abelian group $(A,d)$, we denote by $\widehat A$  the abelian group of continuous characters, \ie of  continuous group homomorphisms $A\to U(1)$, where $A$ is endowed with the topology associated to the metric $d$. 
We retain the notations of section~\ref{secA}. Next statement is motivated by  Lemma~\ref{homhom}.

\begin{proposition}\label{pontrj} Let $(A,d)$ be an abelian group endowed with a translation invariant metric $d$. 
\begin{enumerate}
\item[(i)] For $\lambda, \mu\in\R_{>0}$, the $\spm$-module $\uhom_{\Gamma\cT_*}((A,d)_\lambda,U(1)_\mu)$ is isomorphic to the sub $\spm$-module of $H\widehat A$ which, on the set $k_+$, is given by k-tuples $(\chi_j)$, $1\leq j\leq k$ of continuous characters  
	$\chi_j\in \widehat A$ such that, with $\vert x\vert:=d(x,0)$, and for all finite collections $\{x_i\}\subset A$, fulfill
	\begin{equation}\label{inequ1}
	\sum_i \vert x_{i}\vert \leq \lambda~ \Rightarrow ~\sum_{i,j} \vert \chi_j(x_{i})\vert \leq \mu.
	\end{equation}
	\item[(ii)] Let $p:A'\to A$ be a surjective morphism of abelian groups and let $p^*(A,d)_\lambda$ be the pullback as in Proposition \ref{pullback}. One has the following canonical isomorphism 
	\begin{equation}\label{isoduals}
	\uhom_{\Gamma\cT_*}(p^*(A,d)_\lambda,U(1)_\mu)\simeq \uhom_{\Gamma\cT_*}((A,d)_\lambda,U(1)_\mu).
	\end{equation}
	\end{enumerate}
	\end{proposition}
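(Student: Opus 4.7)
The plan is to adapt the proof strategy of Lemma~\ref{homhom}, with the tolerance relations taking the role of the mass-bounds and continuous characters replacing the scalar $r$.

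For part (i), fix $k$ and take
\[
\phi\in \uhom_{\Gamma\cT_*}((A,d)_\lambda,U(1)_\mu)(k_+) =\Hom_{\Gamma\cT_*}\bigl((A,d)_\lambda,U(1)_\mu(k_+\wedge -)\bigr).
\]
Naturality of $\phi$ with respect to the projections $\delta(j,k)$ from \eqref{partmaps} decomposes $\phi$ into a $k$-tuple of morphisms $\phi_j:(A,d)_\lambda\to U(1)_\mu$, exactly as in the derivation of \eqref{etamap4}, reducing the analysis to the case $k=1$. Since the underlying $\sss$-module structure of $(A,d)_\lambda$ supplies the full addition of $A$ at $(A,d)_\lambda(1_+)=A$, the naturality of a single $\phi$ against the fold $\sigma\in \Hom_\gop(2_+,1_+)$ forces $\phi(1_+):A\to U(1)$ to be a group homomorphism $\chi$. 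Compatibility of $\phi$ with the tolerance relations at level $1_+$ then says that $\chi$ carries the $\lambda$-ball of $A$ into the $\mu$-ball of $U(1)$, which for a homomorphism is equivalent to continuity at $0$; hence $\chi\in \widehat A$. Unpacking the tolerance constraints at level $\ell_+$---namely that admissible families $(x_i)$ with $\sum_i |x_i|\leq \lambda$ must map into admissible elements of $U(1)_\mu(k_+\wedge\ell_+)$---yields exactly the inequality \eqref{inequ1} componentwise on the characters $\chi_j$. Conversely, a $k$-tuple of continuous characters satisfying \eqref{inequ1} reassembles into a natural transformation respecting the tolerance at every level, which gives the claimed description of $\uhom$ as a sub-$\spm$-module of $H\widehat A$.

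For (ii), consider the precomposition morphism of $\spm$-modules
\[
p^*:\uhom_{\Gamma\cT_*}((A,d)_\lambda,U(1)_\mu)\longrightarrow \uhom_{\Gamma\cT_*}(p^*(A,d)_\lambda,U(1)_\mu).
\]
Surjectivity of $p$ forces injectivity of $p^*$ directly. For surjectivity, take $\phi$ on the right-hand side; by (i) applied to $A'$, it is given by a $k$-tuple of continuous characters $\chi_j'\in \widehat{A'}$ satisfying the pullback version of \eqref{inequ1}. Using Proposition~\ref{pullback}, every $z\in \ker p$ satisfies $p(z)=0$ and is therefore tolerance-equivalent to $0$ in $p^*(A,d)_\lambda$ at every scale; the tolerance-preservation property of $\chi_j'$ then forces $\chi_j'(z)=0$, so each $\chi_j'$ descends through $p$ to a continuous character $\chi_j\in \widehat A$. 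The inequality \eqref{inequ1} transfers verbatim, providing the required preimage and thus the isomorphism \eqref{isoduals}.

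The main obstacle is to unpack Proposition~\ref{defnA} precisely enough to identify the tolerance relations on $(A,d)_\lambda(\ell_+)$ and $U(1)_\mu(k_+\wedge \ell_+)$, and to check that the numerical constraint derived from them collapses cleanly into the single inequality \eqref{inequ1}. Once this identification is in place, the remainder of the argument is essentially a transcription of the componentwise computation used in Lemma~\ref{homhom}, with the extra ingredient—absent in the $\R$-case—that continuity of the resulting group homomorphism is now a consequence of tolerance-preservation rather than a byproduct of the dyadic extension.
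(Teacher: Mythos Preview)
Your overall strategy matches the paper's: extract a group homomorphism from the underlying $\sss$-module map, then translate tolerance-preservation into the inequality \eqref{inequ1}, and for (ii) show that the characters vanish on $\ker p$. The paper obtains the homomorphism via the forgetful functor $\cF:\cT\to\Se$ together with full faithfulness of $H$, whereas you use the componentwise decomposition from Lemma~\ref{homhom}; both work.

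There is one genuine gap. You claim that compatibility with the tolerance relation \emph{at level $1_+$ alone}---i.e.\ $\chi(B_\lambda)\subset B_\mu$---is ``for a homomorphism equivalent to continuity at $0$''. This is false as stated: for $\mu\geq \tfrac12$ the ball $B_\mu$ is all of $U(1)$, so the level-$1_+$ condition is vacuous and any (possibly discontinuous) homomorphism satisfies it. Even for $\mu<\tfrac12$ the implication is not immediate, since in $U(1)$ one only has $|ny|\leq n|y|$ and not equality, so halving the source ball does not obviously halve the target ball. The paper avoids this by deriving continuity from the \emph{multi-level} constraint: taking $x_1=\cdots=x_n=x$ in \eqref{inequ1} gives $|x|\leq \lambda/n\Rightarrow |\chi(x)|\leq \mu/n$, which is uniform continuity for every $\mu>0$. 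Your argument is easily repaired since you do establish \eqref{inequ1} afterwards; just move the continuity conclusion after that step.

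A smaller point in (ii): you invoke ``(i) applied to $A'$'', but $p^*(A,d)_\lambda$ is $HA'$ with the pullback tolerance coming from the \emph{pseudometric} $d(p(\cdot),p(\cdot))$, not a metric on $A'$, so (i) does not literally apply. The paper simply repeats the forgetful-functor argument directly on $p^*(A,d)_\lambda$ to obtain $\chi':A'\to U(1)$, and then uses the same repetition trick (all $x'_i=x'\in\ker p$) to force $|\chi'(x')|\leq \mu/k$ for every $k$, hence $\chi'(\ker p)=0$. Your ``at every scale'' remark is exactly this idea; just detach it from the appeal to (i).
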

	\proof $(i)$~Let $\phi\in\uhom_{\Gamma\cT_*}((A,d)_\lambda,U(1)_\mu)(1_+)=\Hom_{\Gamma\cT_*}((A,d)_\lambda,U(1)_\mu)$. By applying the forgetful functor $\cF:\cT\longrightarrow \Se$ which associates to $(X,\cR)$ the set $X$ (see Proposition~\ref{adjoint}), one obtains  an element $\cF(\phi)\in \Hom_\sss(HA,HU(1))$. Since the Eilenberg-MacLane functor   $H$ determines a full and faithful embedding of the category of abelian groups inside the category of $\sss$-modules,  there exists a unique  group homomorphism $\chi:A\to U(1)$  such that $H(\chi)=\cF(\phi)$. The condition that $\phi$ preserves the relation $\cR_k$  on the set $k_+$ means that for any $x_i,y_i\in A$, $1\leq i\leq k$ such that $\sum d(x_i,y_i)\leq \lambda$ one has $\sum d(\chi(x_i),\chi(y_i))\leq \mu$. By translation invariance of the metrics this condition is equivalent to 
	\begin{equation}\label{inequ2}
	\sum_i \vert x_{i}\vert \leq \lambda~ \Rightarrow ~\sum_{i} \vert \chi(x_{i})\vert \leq \mu.
	\end{equation}
	This shows that $\Hom_{\Gamma\cT_*}((A,d)_\lambda,U(1)_\mu)$ consists exactly of the group homomorphisms $\chi:A\to U(1)$ fulfilling \eqref{inequ1}. Specializing \eqref{inequ2} to the case where all $x_i=x$, $i=1,\ldots, n$,  one obtains the implication $\vert x\vert \leq \lambda/n \Rightarrow \vert \chi(x)\vert \leq \mu/n $, and hence that $\chi$ is uniformly continuous. Let $\phi\in\uhom_{\Gamma\cT_*}((A,d)_\lambda,U(1)_\mu)(k_+)=\Hom_{\Gamma\cT_*}((A,d)_\lambda,U(1)_\mu(k_+\wedge -))$. The object $U(1)_\mu(k_+\wedge -)$ of $\Gamma\cT_*$ is $(U(1)^{\times k},d_k)_\mu$, where the metric $d_k$ on the product group $(U(1)^{\times k}$ is defined by
	\[
	d_k((x_i),(y_i)):=\sum_1^kd(x_i,y_i)\quad\forall x_i,y_i \in U(1).
	\]
	Replacing $U(1)_\mu$ with $(U(1)^{\times k},d_k)_\mu$ in the first part of the proof one obtains that $\cF(\phi)\in \Hom_\sss(HA,HU(1)^{\times k})=H((\chi_j))$, where $(\chi_j)$, $1\leq j\leq k$ is a $k$-tuple of  characters of $A$ fulfilling \eqref{inequ1}. It follows that $\chi_j\in \widehat A$.\newline
	$(ii)$~Let $\phi\in\uhom_{\Gamma\cT_*}(p^*(A,d)_\lambda,U(1)_\mu)(1_+)=\Hom_{\Gamma\cT_*}(p^*(A,d)_\lambda,U(1)_\mu)$. As in the proof of $(i)$, there exists a group homomorphism $\chi': A'\to U(1)$ such that $H(\chi')=\cF(\phi)$. Moreover $\chi'$ preserves the relation $\cR_k$ for any $k$, and this implies 
	\[
	 \sum_{i=1}^k \vert p(x'_i)\vert \leq \lambda ~\Rightarrow~ \sum_{i=1}^k \vert \chi'(x'_i)\vert \leq \mu, \qquad \forall (x'_i)\in (A')^{\times k}.
	\]
	In particular, taking all $x'_i =x'\in \ker(p)$ one obtains $\vert \chi'(x')\vert\leq \mu/k $ $\forall k$, and hence 
	$\chi'(\ker(p))=\{1\}$. This implies that there exists a group homomorphism $\chi:A\to U(1)$ such that $\chi'=\chi\circ p$. 
	\endproof 
	
We can now state and prove Serre's duality.

\begin{theorem}\label{serredual} Let $D=\sum_j a_j\{p_j\} + a\{\infty\}$ be an Arakelov divisor  on $\spzb$.  There is a canonical isomorphism of $\spm$-modules 
\begin{equation}\label{serredual1}
H^0(K-D)\simeq \uhom_{\Gamma\cT_*}(H^1(D),U(1)_{\frac 14}),
\end{equation}
where $K$ is the divisor   $K=-2\{2\}$.
\end{theorem}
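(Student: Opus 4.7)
The plan is to characterize the right hand side as a set of characters of $\A_\Q$ subject to explicit local and archimedean constraints, and then to match it with $H^0(K-D)$ via the Pontryagin pairing $q\mapsto(x\mapsto e_\Q(qx/4))$, where $e_\Q$ is the standard self-dual additive character of $\A_\Q$. The divisor $K=-2\{2\}$ and the dualizing object $U(1)_{\frac 14}$ will absorb, respectively, the $2$-adic and archimedean contributions of the factor $1/4$ sitting in the character.

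First I would extend Proposition~\ref{pontrj} to the tolerant $\spm$-module $H^1(D)=(H\A_\Q,\cR)$: its tolerance, while not metric of type $(A,d)_\lambda$, is translation-invariant and determined by the subset ${\rm Image}\,\psi$, so the same forgetful-functor plus Dold-Kan argument identifies an element of $\Hom_{\Gamma\cT_*}(H^1(D),U(1)_{\frac 14}(k_+\wedge -))$ with a $k$-tuple $(\chi_j)_{j=1}^k$ of group homomorphisms $\chi_j\colon\A_\Q\to U(1)$ such that, for every $n$ and every $(z_i)\in{\rm Image}\,\psi(n_+)$, one has $\sum_{i,j}d(\chi_j(z_i),0)\leq 1/4$. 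An ``infinitely many copies'' argument then applies: taking $z_1=\cdots=z_n=z$ for any fixed $z\in\Q+\cO(D)_f$ (allowed since all archimedean components can be set to zero) and letting $n\to\infty$ forces $\chi_j(z)=0$. Hence each $\chi_j$ is trivial on $\Q+\cO(D)_f$, so by Pontryagin self-duality of $\A_\Q$ via $e_\Q$ one has $\chi_j=e_\Q(\eta_j\,\cdot\,)$ for unique $\eta_j\in\Q$, and triviality on $\cO(D)_f=\prod_p p^{-a_p}\Z_p$ translates into $v_p(\eta_j)\geq a_p$ at every finite prime.

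For the archimedean condition, I would specialize the tolerance inequality to the sequence of tuples $(r_i)_{i=1}^n=(e^a/n)_i$, which lies in ${\rm Image}\,\psi(n_+)$ because $\sum_i|r_i|=e^a$; the resulting $n\sum_j d(\eta_j e^a/n,\Z)\leq 1/4$ collapses, for $n$ large enough that $|\eta_j e^a/n|<1/2$, into $\sum_j|\eta_j|\leq e^{-a}/4$. Conversely, these constraints are sufficient: for any $(z_i)\in{\rm Image}\,\psi(n_+)$ the rational and finite-adelic parts of $z_i$ contribute $0$, and the archimedean parts give $\sum_{i,j}|\eta_j r_i|\leq e^a\sum_j|\eta_j|\leq 1/4$, with each $|\eta_j r_i|\leq 1/4<1/2$ so that absolute values equal distances in $\R/\Z$. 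This identifies the RHS at $k_+$ with $\{(\eta_j)\in\Q^k:v_p(\eta_j)\geq a_p~\forall p,~\sum_j|\eta_j|\leq e^{-a}/4\}$.

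Finally, $H^0(K-D)(k_+)=\{(q_j)\in\Q^k:v_p(q_j)\geq a_p~\text{for }p\neq 2,~v_2(q_j)\geq a_2+2,~\sum_j|q_j|\leq e^{-a}\}$, and the map $(q_j)\mapsto(q_j/4)$ is a bijection onto the set above: division by $4$ shifts $v_2$ by $-2$ (matching the $-2$ coefficient of $\{2\}$ in $K$), preserves $v_p$ for $p\neq 2$, and scales the archimedean bound by $1/4$ (matching the $\frac 14$ in the dualizing module). The induced natural transformation $q\mapsto(x\mapsto e_\Q(qx/4))$ is componentwise and hence automatically functorial in $k_+$, giving the desired isomorphism of $\spm$-modules. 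The main obstacle is making the ``infinitely many copies'' argument rigorous in both directions and verifying that the normalization $\mu=1/4$ of the dualizing object is exactly the one forcing the local and archimedean factors to match those of $K=-2\{2\}$; any other choice of $\mu$ would destroy the bijection with $H^0(K-D)$.
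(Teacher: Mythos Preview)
Your approach is correct and reaches the same conclusion, but it follows a genuinely different route from the paper. The paper first invokes Proposition~\ref{thedimh1} to identify $H^1(D)=\pi^*((\R/L,d)_\lambda)$, then uses Proposition~\ref{pontrj}~$(ii)$ to pass the internal $\uhom$ through the pullback, reduces by linear equivalence to $D=a\{\infty\}$ (so $L=\Z$), and finally applies Proposition~\ref{pontrj}~$(i)$ to the metric object $(\R/\Z,d)_\lambda$, identifying the dual directly with $\Vert H\Z\Vert_{1/(4\lambda)}$. Your argument instead stays with the adelic presentation $(H\A_\Q,\cR)$ throughout: you kill $\Q+j(\cO(D)_f)$ by the repetition trick, use self-duality of $\A_\Q$ to parametrize the surviving characters by $\eta_j\in\Q$ with local constraints, and then exhibit the map $q\mapsto q/4$ as the isomorphism with $H^0(K-D)$. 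This has the virtue of treating a general $D$ without reduction to the archimedean case and of making the role of the factor $1/4$ (hence of $K=-2\{2\}$) completely explicit in the character $x\mapsto e_\Q(qx/4)$; the paper's route, on the other hand, cleanly reuses the structural Propositions~\ref{thedimh1} and~\ref{pontrj} already in place.

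There is one logical gap in your ordering. You invoke Pontryagin self-duality of $\A_\Q$ to write $\chi_j=e_\Q(\eta_j\,\cdot\,)$ \emph{before} establishing that $\chi_j$ is continuous; but self-duality only classifies continuous characters, and at that point $\chi_j$ is merely an abstract group homomorphism $\A_\Q\to U(1)$. The fix is to reorder: once $\chi_j$ vanishes on $\Q+j(\cO(D)_f)$ it factors through $\R/L$, and your archimedean repetition argument (with $r_i=e^a/n$) then gives the bound $\sum_j d(\bar\chi_j(r),0)\leq (1/4)\,|r|/e^a$ for small $r$, which forces continuity of $\bar\chi_j$ on $\R/L$ (this is the same mechanism as in Lemma~\ref{homhom} and Proposition~\ref{pontrj}). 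Only then can you conclude $\bar\chi_j\in\widehat{\R/L}$ and lift to $\eta_j\in\Q$ with the stated valuation conditions. With this reordering your argument is complete.
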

\proof By Proposition \ref{thedimh1},  with $\lambda=\exp a$, one has $H^1(D)=\pi^*((\R/L,d)_\lambda)$ and by  Proposition \ref{pontrj}, $(ii)$, one gets the isomorphism 
\[
\uhom_{\Gamma\cT_*}(H^1(D),U(1)_{\frac 14})\simeq \uhom_{\Gamma\cT_*}((\R/L,d)_\lambda,U(1)_{\frac 14}).
\]
In fact, we can assume that $D=a \{\infty\}$, with $a=\deg D$ so that $L=\Z$. Then we apply Proposition \ref{pontrj} $(i)$, with $A=\R/\Z$ and $\mu=\frac 14$. One has $\widehat A=\Z$ and the characters $\chi_n\in \widehat A$ are given by multiplication by $n$, \ie $\chi_n(s):=ns\in \R/\Z$, $\forall s\in \R/\Z$. Next, we need to determine the $\spm$-submodule of $H\widehat A=H\Z$ which, on the set $k_+$, is given by $k$-tuples $(n_j)$, $1\leq j\leq k$ of  characters  
	$n_j\in \Z$ such that \eqref{inequ1} holds. This means, using the distance $d$ on $\R/\Z$, that
	\begin{equation}\label{inequ1for}
	\sum_i d (x_{i},0) \leq \lambda~ \Rightarrow ~\sum_{i,j} d( n_j x_{i},0) \leq \frac 14.
	\end{equation}
	The distance $d(x,0)$ is given, for any $x'\in \R$ in the class of $x$ by the distance between $x'$ and the closed subset $\Z\subset \R$. Thus for any integer $n$ one has: $d(nx,0)\leq \vert n\vert d(x,0)$. Assume that $\sum_j \vert n_j\vert \leq \frac{1}{4\lambda}$, then  \eqref{inequ1for} follows since
	\[
	\sum_{i,j} d( n_j x_{i},0)\leq \sum_{i,j} \vert n_j\vert d( x_{i},0)\leq \frac{1}{4\lambda}\sum_i d (x_{i},0).
	\]
	Conversely, assume  \eqref{inequ1for}. Then repeating $m$ times the same $x$,  gives  
	\[
	 d (x,0) \leq \frac\lambda m ~\Rightarrow ~\sum_{j} d( n_j x,0) \leq \frac {1}{4m}.
	\]
 Taking $m$ large enough and  $x=\frac\lambda m$ one obtains $\sum_{j} \vert n_j \vert x\leq \frac {1}{4m}$, and hence $\sum_j \vert n_j\vert \leq \frac{1}{4\lambda}$. This proves that the $\spm$-submodule of $H\widehat A=H\Z$ determined by  \eqref{inequ1} is equal to $\Vert H\Z\Vert_{\frac{1}{4\lambda}}$ which gives \eqref{serredual1}.\endproof

\begin{appendix}

\section{Tolerance $\sss$-modules}\label{secA}

 The construction of the category $\Gamma\cS_*$ of $\Gamma$-spaces (see appendix~\ref{secB}) can be broadly generalized by considering in place of  the category $\cS_*$ of simplicial pointed sets any  pointed category $\cC$ with initial and final object $*$. In this way, one obtains a category $\Gamma\cC$ of pointed covariant functor $\gop \longrightarrow \cC$. We shall apply this formal construction to  the category of tolerance relations and introduce the notion of tolerant $\spm$-modules which plays a central role in the development  of the absolute Riemann-Roch problem.
We start with the following general fact
 
 \begin{lemma}\label{catcat} Let $\cC$ be a pointed category with initial and final object $*$. Then $\Gamma\cC$ is naturally enriched in $\sss$-modules. More  precisely, the following formula endows the internal $\uhom_{\Gamma\cC}(A,B)$ with a structure of $\sss$-module defined by
 \begin{equation}	\label{catcat1}
 	\uhom_{\Gamma\cC}(A,B)(k_+):=\Hom_{\Gamma\cC}(A,B(k_+\wedge -))\qquad k\in\N.
 \end{equation}
 \end{lemma}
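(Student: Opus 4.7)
The plan is to verify two things: that for each pair $A,B\in\Gamma\cC$ the formula \eqref{catcat1} defines a functor $\gop\to\Se_*$ (so $\uhom_{\Gamma\cC}(A,B)$ is indeed an $\sss$-module), and that the assignment is compatible with composition in $\Gamma\cC$ so as to yield a genuine $\sss$-module enrichment.

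The first step is to record the elementary observation that for each $k_+\in\gop$ the smash product $k_+\wedge-$ defines an endofunctor of $\gop$ (smashing of finite pointed sets is associative, unital, and preserves basepoints). Hence for each $B:\gop\to\cC$ in $\Gamma\cC$, the composite $X\mapsto B(k_+\wedge X)$ is again a pointed functor $\gop\to\cC$, since $k_+\wedge *_+=*_+$ is sent to the zero object $*\in\cC$. This produces a well-defined object $B(k_+\wedge-)\in\Gamma\cC$, and hence the pointed set $\Hom_{\Gamma\cC}(A,B(k_+\wedge-))$ with distinguished element the unique natural transformation factoring through the zero object.

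Next, I would establish functoriality in $k_+$. Any morphism $\phi:k_+\to \ell_+$ in $\gop$ yields a natural transformation $\phi\wedge-:(k_+\wedge-)\Rightarrow(\ell_+\wedge-)$ of endofunctors of $\gop$; whiskering by $B$ gives a morphism $B(\phi\wedge-):B(k_+\wedge-)\to B(\ell_+\wedge-)$ in $\Gamma\cC$. Postcomposition with this morphism defines the transition map
\[
\Hom_{\Gamma\cC}(A,B(k_+\wedge-))\longrightarrow \Hom_{\Gamma\cC}(A,B(\ell_+\wedge-)),
\]
which manifestly preserves basepoints and respects identities and composition by functoriality of $B$ and of the smash product. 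This produces the required functor $\uhom_{\Gamma\cC}(A,B):\gop\to\Se_*$.

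Finally, for the enrichment statement I would note that the construction is bifunctorial: a morphism $A'\to A$ in $\Gamma\cC$ induces by precomposition a morphism of $\sss$-modules $\uhom_{\Gamma\cC}(A,B)\to\uhom_{\Gamma\cC}(A',B)$, and symmetrically for $B$. Composition in $\Gamma\cC$ upgrades to pairings
\[
\uhom_{\Gamma\cC}(A,B)(k_+)\wedge \uhom_{\Gamma\cC}(B,C)(\ell_+)\longrightarrow \uhom_{\Gamma\cC}(A,C)((k\ell)_+),
\]
sending $(f,g)$ to the natural transformation $g(k_+\wedge-)\circ f$, with the associativity of smash on $\gop$ guaranteeing the associativity and unitality coherences. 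The only step requiring care — and the main conceptual point rather than an obstacle — is the verification that the smash-product action on the Hom bifunctor is compatible with composition; but this follows tautologically from associativity of $\wedge$ on finite pointed sets and from the functoriality of $B$. Since the construction uses only the existence of a zero object in $\cC$ together with the $\gop$-structure, no additional hypothesis on $\cC$ is needed, and the lemma is established.
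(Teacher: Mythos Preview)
Your proof is correct and follows essentially the same approach as the paper: both obtain the $\sss$-module structure on $\uhom_{\Gamma\cC}(A,B)$ by noting that a morphism $\phi:k_+\to\ell_+$ in $\gop$ induces a natural transformation $B(\phi\wedge-):B(k_+\wedge-)\to B(\ell_+\wedge-)$, and that postcomposition with it gives the required functoriality. You are somewhat more thorough than the paper, which proves only the $\sss$-module structure and leaves the composition pairing implicit, whereas you spell out the bifunctoriality and the associativity of the enrichment via associativity of $\wedge$.
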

\proof Let $\phi\in \Hom_\gop(k_+,\ell_+)$. For every object $F$ of $\gop$ the morphism $\phi \wedge \id\in \Hom_\gop(k_+\wedge F,\ell_+\wedge F)$ gives, by functoriality of $B:\gop \longrightarrow\cC$, a morphism $B(\phi \wedge \id)\in \Hom_\cC(B(k_+\wedge F),B(\ell_+\wedge F))$. These morphisms define a natural transformation of functors $B(\phi\wedge -)\in \Hom_{\Gamma\cC}(B(k_+\wedge -), B(\ell_+\wedge -))$, and one obtains the functoriality on the right hand side of \eqref{catcat1} using the left composition 
\[
\psi\in \Hom_{\Gamma\cC}(A,B(k_+\wedge -))\mapsto B(\phi\wedge -)\circ \psi \in 
\Hom_{\Gamma\cC}(A,B(\ell_+\wedge -)).
\]
 \endproof 
 
  \subsection{The category $\Gamma\cT_*$}\label{gamma-t}

 A tolerance relation $\cR$ on a set $X$ is a reflexive and symmetric relation on  $X$.  Equivalently, $\cR$ is a subset $\cR\subset X\times X$ which is symmetric and containing the diagonal. We shall denote by $\cT$ the category of tolerance relations $(X,\cR)$. Morphisms in  $\cT$  are defined by \[ 
\Hom_\cT((X,\cR),(X',\cR')):=\{	\phi: X \to X', \  \phi(\cR)\subset \cR'\}.
\]
We denote  $\cT_*$ the pointed category under the object $\{\ast\}$ endowed with the trivial relation. One has the following

\begin{definition} \label{cT} A tolerant $\sss$-module is  a pointed covariant functor $\gop \longrightarrow \cT_*$.
We denote by $\Gamma\cT_*$ the category of tolerant $\sss$-modules.
\end{definition}

Next statement is an easy but useful fact

\begin{proposition}\label{adjoint} 
\begin{enumerate}
\item[(i)] The functor $\Se\longrightarrow \cT$ which endows a set with the diagonal relation, embeds the category of sets as a full subcategory of $\cT$, and consequently the category of $\sss$-modules  as a full subcategory of the category $\Gamma\cT_*$.
\item[(ii)] The forgetful functor is the right adjoint of the inclusion in $(i)$.	
\end{enumerate}
\end{proposition}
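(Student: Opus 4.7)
Both parts are essentially formal consequences of the fact that the diagonal $\Delta_X\subset X\times X$ is the minimal tolerance relation on $X$. For (i), I would first define the functor $\iota:\Se\longrightarrow\cT$ by $\iota(X)=(X,\Delta_X)$ and, on morphisms, $\iota(f)=f$. This is well-defined because any map of sets $f:X\to Y$ satisfies $(f\times f)(\Delta_X)\subset\Delta_Y$, so $f\in\Hom_\cT((X,\Delta_X),(Y,\Delta_Y))$. Fullness and faithfulness reduce to the trivial observation that a morphism in $\cT$ between two objects carrying the diagonal relation is just an arbitrary map of underlying sets, whence
\[
\Hom_\cT(\iota(X),\iota(Y))=\Hom_\Se(X,Y).
\]
Passing to the pointed version is automatic (a basepoint together with the diagonal relation $\{(*,*)\}$ is preserved).

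\noindent To promote the embedding from sets to $\sss$-modules, I would use the general fact that post-composition with a fully faithful functor induces a fully faithful functor between functor categories. Concretely, given two $\sss$-modules $\cF,\cG:\gop\longrightarrow\Se_*$, a natural transformation $\iota\circ\cF\Rightarrow\iota\circ\cG$ consists, for each $k_+$, of a morphism in $\cT_*$ between $\iota(\cF(k_+))$ and $\iota(\cG(k_+))$, which by the first step is just a pointed map $\cF(k_+)\to\cG(k_+)$, with naturality squares identical on both sides. Hence
\[
\Hom_{\sss\text{-Mod}}(\cF,\cG)=\Hom_{\Gamma\cT_*}(\iota\cF,\iota\cG),
\]
which gives the full subcategory embedding of $\sss$-modules into $\Gamma\cT_*$.

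\noindent For (ii), let $\cF:\cT\longrightarrow\Se$ be the forgetful functor $(X,\cR)\mapsto X$. To establish $\iota\dashv\cF$, I would produce the natural bijection
\[
\Hom_\cT\bigl((X,\Delta_X),(Y,\cR)\bigr)\;\cong\;\Hom_\Se(X,Y).
\]
The right-to-left map sends a set map $f:X\to Y$ to itself, viewed as a morphism of tolerance sets: this is legitimate because $(f\times f)(\Delta_X)\subset\Delta_Y\subset\cR$ by reflexivity of $\cR$. The left-to-right map is the forgetful operation. These two assignments are mutually inverse, and naturality in $X$ (by pre-composition) and in $(Y,\cR)$ (by post-composition, using that any $\cT$-morphism is in particular a set map) is immediate. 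This also gives the pointed version by preserving basepoints throughout.

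\noindent There is no real obstacle: the statement is a tautology once one records that $\Delta_X$ is the smallest reflexive symmetric relation, so imposing it as source imposes no compatibility and imposing it as target imposes the sharpest compatibility. The only mild care needed is to verify that the full and faithful embedding of $\Se_*$ into $\cT_*$ propagates to the functor categories indexed by $\gop$, which is a standard \emph{post-composition} argument and does not require any further structure on $\gop$.
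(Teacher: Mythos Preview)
Your argument is correct. The paper does not actually supply a proof of this proposition: it is introduced merely as ``an easy but useful fact'' and left unproved, so your write-up fills in exactly the routine verification the authors omit, and your approach (minimality of the diagonal relation, then post-composition to pass to functor categories) is the natural one.
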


\subsection{The tolerant $\sss$-module $(A,d)_\lambda$}

 A relevant example of tolerant $\sss$-module is given by the following construction. Let $A$ be an additive abelian group. A translation invariant metric $d$ on $A$ is a metric on $A$ that satisfies $d(x,y)=d(x-y,0)$, so that the triangle inequality can be  read as $d(x+y,0)\leq d(x,0)+d(y,0)$ $\forall x,y \in A$. This fact implies that the inequality 
\begin{equation}\label{inequ}
d\bigg(\sum_I x_i,\sum_I y_i\bigg)\leq \sum_I d(x_i,y_i)
\end{equation}
holds for any finite index set $I$ and maps $x, y:I\to A$.

\begin{proposition}\label{defnA} Let $(A,d)$ be an abelian group endowed with a translation invariant metric $d$ and let $\lambda\in \R_{>0}$. The following relations   turn the Eilenberg?MacLane $\sss$-module $HA$ into a tolerant $\spm$-module $(A,d)_\lambda: \gop\longrightarrow \cT_*$:
\begin{equation}\label{inequdef} \mathcal R_k= \{((x_i),(y_i))\in A^{\times k}\times A^{\times k}\mid \sum_{I=k_+} d(x_i,y_i)\leq \lambda \}\qquad \forall k_+,~k\in\N.
\end{equation}
\end{proposition}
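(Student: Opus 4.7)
\proof[Proof plan for Proposition~\ref{defnA}]
The plan is to verify, step by step, the three conditions implicit in the statement: (a) each $\mathcal R_k$ is a tolerance relation on $HA(k_+)=A^{\times k}$; (b) the assignment $k_+\mapsto (A^{\times k},\mathcal R_k)$ is functorial in $\gop$, so that it really defines an object of $\Gamma\cT_*$; (c) the $\spm$-module structure of $HA$ extends to this enriched object.

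First, I would verify (a). Reflexivity of $\mathcal R_k$ is immediate because $d(x_i,x_i)=0$ for all $i$, so $((x_i),(x_i))\in\mathcal R_k$ regardless of $\lambda$. Symmetry follows from the symmetry of $d$: $\sum_i d(x_i,y_i)=\sum_i d(y_i,x_i)$. For $k=0$ the set $HA(0_+)=\{*\}$ carries the unique (trivial) relation, which handles the base point requirement and shows that $(A,d)_\lambda$ lands in the pointed category $\cT_*$.

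The main step is (b), namely to check that every morphism $\phi\in\Hom_\gop(k_+,\ell_+)$ induces a morphism in $\cT_*$. Recall that $HA(\phi):A^{\times k}\to A^{\times \ell}$ is the ``push-forward by summation over pre-images'': $HA(\phi)((x_i))_j=\sum_{i\in\phi^{-1}(j)}x_i$ for $j\in\ell_+\setminus\{*\}$. Assume $((x_i),(y_i))\in\mathcal R_k$, so $\sum_{i\in k_+\setminus\{*\}}d(x_i,y_i)\le\lambda$. Applying the translation-invariance inequality \eqref{inequ} on each fiber $\phi^{-1}(j)$ gives
\[
d\bigg(\sum_{i\in\phi^{-1}(j)}x_i,\sum_{i\in\phi^{-1}(j)}y_i\bigg)\le\sum_{i\in\phi^{-1}(j)}d(x_i,y_i).
\]
Summing over $j\in\ell_+\setminus\{*\}$ and using that the fibers $\phi^{-1}(j)$ are pairwise disjoint and contained in $k_+\setminus\{*\}$, one obtains
\[
\sum_{j}d\bigg(\sum_{i\in\phi^{-1}(j)}x_i,\sum_{i\in\phi^{-1}(j)}y_i\bigg)\le\sum_{i\in k_+\setminus\{*\}}d(x_i,y_i)\le\lambda,
\]
which is precisely the statement $(HA(\phi)(x),HA(\phi)(y))\in\mathcal R_\ell$. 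This is the only nontrivial computation, and the translation-invariance of $d$ (through \eqref{inequ}) is exactly what makes it work; everything else is bookkeeping.

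Finally for (c), I would note that $HA$ is already an $\spm$-module via the action of $\{-1,0,1\}$ on $A$ (with $-1$ acting by inversion), and the tolerance relations are compatible with this action: the action of $0$ sends everything to the base point (trivially in $\mathcal R_\ell$), while the action of $-1$ replaces each $x_i$ by $-x_i$, and translation invariance yields $d(-x_i,-y_i)=d(x_i-y_i,0)=d(x_i,y_i)$, preserving $\mathcal R_k$. Combined with (b), this shows that $(A,d)_\lambda$ is a well-defined pointed functor $\gop\to\cT_*$ compatible with the $\spm$-structure, i.e.\ a tolerant $\spm$-module. The main obstacle is essentially already passed in (b); no other delicate point arises.
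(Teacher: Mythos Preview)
Your proposal is correct and follows essentially the same approach as the paper: the key computation is your step (b), which is exactly the paper's argument of applying inequality \eqref{inequ} to the fibers $I_j=\phi^{-1}(j)$ to verify $HA(\phi)^{\times 2}(\cR_k)\subset\cR_\ell$. The paper's proof is more terse and leaves the verifications in your steps (a) and (c) implicit, but the substance is the same.
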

\proof For any $\phi\in \Hom_\gop(k_+,\ell_+)$, the map $HA(\phi):HA(k_+)\to HA(\ell_+)$ fulfills $HA(\phi)^{\times 2}(\cR_k)\subset \cR_\ell$. Indeed, this follows from \eqref{inequ} applied to the finite sets $I_j=\{i\in k_+\mid \phi(i)=j\}$ which label pairs of elements of $A$. \endproof 

 Let  $U(1)$ be the abelian group $\R/\Z$ endowed with the canonical metric $d$ of length $1$. We shall denote by $U(1)_\lambda$ the tolerant $\sss$-module $(U(1),d)_\lambda$,  ($\lambda\in\R_{>0}$).

 \subsection{The dimension of a tolerant $\spm$-module}
 
 In this part we  introduce a notion of dimension for a tolerant $\spm$-module that naturally generalizes, in the absolute context, the  definition of dimension of a vector space. 
 
\begin{definition}\label{generator}
 Let $(E,\cR)$ be a tolerant $\spm$-module. 
	A subset $F\subset E(1_+)$ generates $E(1_+)$ if the following two conditions hold
	\begin{enumerate}
	\item For $x,y\in F$, with $x\neq y ~\Longrightarrow~ (x,y)\notin \cR$	
	\item For every   $x\in E(1_+)$ there exists  $\alpha_j\in \{-1,0,1\}$,  $j\in F$ and $y\in E(1_+)$ such that $y=\sum_F \alpha_j j\in E(1_+)$ in the sense of \eqref{defnsum},  and $(x,y)\in \cR$.
	\end{enumerate}
  The dimension $\dim_{\spm}(E,\cR)$ is defined as the minimal cardinality of a generating set $F$.
	\end{definition}

 To familiarize with this notion we  prove the following 
 
 \begin{proposition}\label{pullback} Let $p:A\to B$ be a  morphism of abelian groups and $(HB,\cR)$ a tolerant $\spm$-module.
 \begin{enumerate}
\item[(i)] Consider the relation  $p^*(\cR_k):=\{(x,y)\in HA(k_+)\times HA(k_+)\mid (Hp(x),Hp(y))\in \cR_k\}$. Then the pair $p^*(HB,\cR):=(HA,p^*(\cR))$ is a tolerant $\spm$ module.
\item[(ii)] If $p$ is surjective: $\dim_{\spm}(p^*(HB,\cR))=\dim_{\spm}(HB,\cR)$.
\end{enumerate}
 \end{proposition}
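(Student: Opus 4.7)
My plan is to verify (i) by a routine naturality check, then for (ii) prove the two inequalities $\dim_{\spm}(p^*(HB,\cR)) \leq \dim_{\spm}(HB,\cR)$ and the reverse by explicit constructions on generating sets, with surjectivity of $p$ playing an essential role.

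For part (i), I would observe that $p^*(\cR_k)$ is reflexive and symmetric as the preimage under $(Hp(k_+))^{\times 2}$ of a reflexive, symmetric relation. To verify that $p^*(\cR)$ makes $HA$ into an object of $\Gamma\cT_*$, I need to check that for every $\phi \in \Hom_\gop(k_+,\ell_+)$ the map $HA(\phi)^{\times 2}$ carries $p^*(\cR_k)$ into $p^*(\cR_\ell)$. This follows because $Hp: HA \to HB$ is a natural transformation of $\sss$-modules, so $Hp(\ell_+) \circ HA(\phi) = HB(\phi) \circ Hp(k_+)$, and then one invokes the already established $\Gamma\cT_*$-structure of $(HB,\cR)$.

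For part (ii), first inequality $\leq$: Given a generating set $F = \{b_1,\dots,b_n\}$ of $(HB,\cR)(1_+) = B$ with $n = \dim_{\spm}(HB,\cR)$, I lift it using surjectivity of $p$ to a set $\tilde F = \{a_1,\dots,a_n\} \subset A$ with $p(a_i) = b_i$. The $a_i$ are distinct because the $b_i$ are. For condition (1) of Definition \ref{generator}, if $a_i \neq a_j$ then $b_i \neq b_j$, so $(b_i,b_j) \notin \cR$, hence $(a_i,a_j) \notin p^*(\cR)$ by definition. For condition (2), given $x \in A$, find $\alpha_j \in \{-1,0,1\}$ with $y' = \sum_j \alpha_j b_j \in B$ and $(p(x), y') \in \cR$; then $y := \sum_j \alpha_j a_j$ satisfies $p(y) = y'$ so $(x,y) \in p^*(\cR)$.

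For the reverse inequality $\geq$, take a generating set $\tilde F = \{a_1,\dots,a_m\}$ for $(HA, p^*(\cR))$ and set $F := \{p(a_1),\dots,p(a_m)\}$. The main point I expect to require care is verifying condition (1) for $F$, since a priori distinct $a_i, a_j$ could map to the same or to $\cR$-related elements of $B$. But here the reflexivity of $\cR$ saves us: if $p(a_i) = p(a_j)$ with $a_i \neq a_j$ then reflexivity forces $(a_i,a_j) \in p^*(\cR)$, contradicting condition (1) for $\tilde F$; similarly, if $p(a_i) \neq p(a_j)$ but $(p(a_i),p(a_j)) \in \cR$, then $(a_i,a_j) \in p^*(\cR)$, again contradicting (1). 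So $p$ is injective on $\tilde F$ (giving $\#F = \#\tilde F$) and condition (1) for $F$ holds. For condition (2), given $b \in B$ use surjectivity to pick $x \in A$ with $p(x) = b$, find $\alpha_j$ with $(x, \sum_j \alpha_j a_j) \in p^*(\cR)$, and push forward by $p$ to obtain $(b, \sum_j \alpha_j p(a_j)) \in \cR$. Combining both inequalities yields the desired equality of dimensions.
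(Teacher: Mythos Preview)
Your proposal is correct and follows essentially the same approach as the paper: part (i) via naturality of $Hp$, and part (ii) by lifting a generating set from $B$ to $A$ for $\leq$ and pushing one forward from $A$ to $B$ for $\geq$. You are in fact more explicit than the paper in verifying condition~(1) of Definition~\ref{generator} in both directions (the paper simply asserts it), and your use of reflexivity to rule out collisions $p(a_i)=p(a_j)$ is exactly the right observation.
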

 \proof $(i)$~Since $(HB,\cR)$ is a tolerant $\spm$-module, for any $\phi\in \Hom_\gop(k_+,\ell_+)$ one has $HB(\phi)^{\times 2}(\cR_k)\subset \cR_\ell$.  Also  
 \[
 HA(\phi)^{\times 2}(p^*\cR_k)\subset p^*\cR_\ell \qquad\forall \phi\in \Hom_\gop(k_+,\ell_+),
 \]
 which shows that $p^*(HB,\cR)=(HA,p^*(\cR))$ is a tolerant $\spm$ module. \newline 
 $(ii)$~Let $F\subset HB(1_+)=B$ be a generating set for $(HB,\cR)$, and let $F'\subset HA(1_+)=A$ be a lift of $F$, with $\# F'=\# F$. Let us show that $F'$ is a generating set for $p^*(HB,\cR)=(HA,p^*(\cR))$. Let $a\in HA(1_+)=A$, then there exists coefficients $\alpha_j\in \{-1,0,1\}$,  $j\in F$, such that  $(p(a),\sum_F \alpha_j j)\in \cR$. It follows that using the lifts $j'\in F'$ of $j\in F$ one has $(a,\sum_{F'} \alpha_j j')\in p^*(\cR)$, hence $F'$ is a generating set for $p^*(HB,\cR)$. Thus $\dim_{\spm}(p^*(HB,\cR))\leq \dim_{\spm}(HB,\cR)$. Conversely, let $F'\subset A$ be a generating set for $p^*(HB,\cR)$ and $F:=p(F')$. Then condition 1. of Definition \ref{generator} for $F'$ implies the same condition for $F$, thus one has $\# F'=\# F$. Let  $b\in HB(1_+)=B$ and $a\in A$ with $p(a)=b$. Then there exists coefficients $\alpha_j\in \{-1,0,1\}$,  $j\in F'$, such that  $(a,\sum_{F'} \alpha_j j')\in p^*(\cR)$. This implies
 $(b,\sum \alpha_j j)\in \cR$ so that $F$ is a generating set for $(HB,\cR)$. \endproof  
 
 Next, we apply this functorial machinery to the geometry of $\spzb$. We retain the notations of section~\ref{sec2}.
 
 \begin{proposition}\label{thedimh1} Let $D=\sum_j a_j\{p_j\} + a\{\infty\}$ be an Arakelov divisor on $\spzb$ and let $\pi:\A_\Q\to \R/L$ be the projection of the  adeles on their archimedean component modulo the lattice
 \begin{equation}\label{weil6}
 L= H^0(\Spec\Z,\cO(D)_f):=\{q \in \Q\mid \vert q\vert_\nu \leq \exp(D)(\nu),~\forall \nu\neq \infty \}.
\end{equation}
\begin{enumerate}
\item[(i)] Let $d$ be the metric on $\R/L$ induced by the standard metric on $\R$ and set $\lambda=\exp a$. Then one has $H^1(D)=\pi^*((\R/L,d)_\lambda)$. 
\item[(ii)] 
 	$\dim_{\spm}H^1(D)=\dim_{\spm}(\R/L,d)_\lambda$. 	
 	\end{enumerate}
 \end{proposition}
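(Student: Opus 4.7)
The plan is to identify the tolerance on $H\A_\Q$ that defines $H^1(D)$ with the pullback along $H\pi$ of the ball relation of $(\R/L,d)_\lambda$, and then to invoke Proposition \ref{pullback}$(ii)$ to get the dimension equality in one step.

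First I would unwrap $\cR_k$ from Proposition \ref{setup1}$(ii)$: a pair $((x_i),(y_i))$ lies in $\cR_k$ if and only if there exist $q_i\in \Q$ (diagonally embedded) and $c_i\in \cO(D)$ with $\sum_i |c_{i,\infty}|\leq \lambda=e^a$ such that $z_i:=x_i-y_i=q_i+c_i$ for every $i$. The non-archimedean part of this constraint is $z_{i,f}-q_i\in \cO(D)_f$. I would then invoke the approximation identity $\A_f=\Q+\cO(D)_f$ (valid because $\cO(D)_f$ is a compact open subgroup of $\A_f$, commensurable with $\hat\Z$, and $\A_f=\Q+\hat\Z$): this produces, for each $i$, a rational $q_i$ satisfying the finite-place constraint, unique modulo $L=\Q\cap \cO(D)_f$, and identifies the archimedean residue $(z_{i,\infty}-q_i)\bmod L \in \R/L$ with $\pi(z_i)$.

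The crux is then to show that the existence of rational representatives $q_i$, each adjustable by an element of $L$ independently, with $\sum_i |z_{i,\infty}-q_i|\leq \lambda$, is equivalent to $\sum_i d(\pi(z_i),0)\leq \lambda$. Choosing lifts that attain the quotient distance in each coordinate gives one direction; the elementary bound $d(\pi(z_i),0)\leq |z_{i,\infty}-q_i|$ for any lift gives the other. By translation invariance of $d$, this matches $\cR_k$ with $(H\pi\times H\pi)^{-1}$ of the tolerance on $(\R/L,d)_\lambda(k_+)$, which by Proposition \ref{pullback}$(i)$ is exactly the relation defining $\pi^*((\R/L,d)_\lambda)$, yielding $(i)$. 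Part $(ii)$ is then immediate from Proposition \ref{pullback}$(ii)$, since $\pi$ is a surjective morphism of abelian groups (for any $r\in \R$, the adele $(0,r)$ maps to $r\bmod L$). The main obstacle I anticipate is the coordinate-wise independent optimization in step two: because each $q_i$ can be shifted by an element of $L$ separately, the total archimedean bound must be minimized coordinate-by-coordinate, and seeing that this joint freedom is captured precisely by the sum $\sum_i d(\pi(z_i),0)$ of quotient distances—rather than by some single distance in $\R/L$—is the heart of the matching.
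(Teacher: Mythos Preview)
Your proposal is correct and follows essentially the same route as the paper: both use the approximation $\A_f=\Q+\cO(D)_f$ to reduce the relation $\cR_k$ to an archimedean condition on $\R/L$, and both conclude part $(ii)$ by invoking Proposition~\ref{pullback}$(ii)$. The paper packages the first reduction more structurally---it forms the subgroup $G=\Q\times j(\cO(D)_f)$, establishes the isomorphism $\A_\Q/\alpha(G)\simeq\R/L$ via $p_\infty$, and then quotients $\psi$ by $G$ to obtain $\phi:\Vert H\R\Vert_{e^a}\to H(\R/L)$---whereas you work element by element with the relation; but the content is the same, and your explicit treatment of the coordinate-wise optimization (attainability of the infimum over the discrete lattice $L$ in each coordinate) actually spells out a step the paper leaves implicit in its ``Thus one obtains\ldots''.
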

\proof 
$(i)$~Let $j: \A_\Q^f\to \A_\Q$, $j(x):= (x,0)$, be the embedding of  finite adeles in adeles. Using the ultrametric property of  the local norms at  the finite places one sees that $j(\cO(D)_f)=\cO(D)\cap j(\A_\Q^f)\subset \A_\Q$ is a compact subgroup. Set $G=\Q\times j(\cO(D)_f)$: one has $j(\cO(D)_f)\cap\Q=\{0\}$ since all the adeles in $j(\cO(D)_f)$ have archimedean component equal to $0$. Thus, the  restriction  of the morphism of  addition,  $\alpha:\Q\times \A_\Q \to \A_\Q$, $\alpha(q,a)=q+a$,  to $G$   determines an isomorphism of $G$   with  the  subgroup  $\alpha(G)=\Q+j(\cO(D)_f)$ of $\A_\Q$. Note, in particular, that $\alpha(G)$  is closed  in $\A_\Q$, since $\Q$ is discrete  (hence closed) and $j(\cO(D)_f)$ is compact. In the following, we identify (set-theoretically)  $\A_\Q$ with the product $\A^f_\Q\times \R$  endowed with the two projection morphisms   $p_f: \A_\Q\to \A^f_\Q$ and $p_\infty: \A_\Q\to \R$. The subgroup $p_f(\Q)\subset \A^f_\Q$ is dense and the subgroup $p_f(j(\cO(D)_f))=\cO(D)_f\subset \A^f_\Q$ is open.  Hence   $p_f(\alpha(G))=\A^f_\Q$. Thus  the projection $p_\infty$ induces   the isomorphism  of groups   $p:\A_\Q/\alpha(G)\stackrel{\sim}{\to} \R/L$, where $L=p_\infty(\ker(p_f\circ \alpha)) $. The kernel  of the composite $p_f\circ \alpha: G\to \A_\Q^f$  is the group of pairs $(q,a)\in \Q\times j(\cO(D)_f)$ such that $p_f(q)+p_f(a)=0$. Such pairs are determined by the value of $q$ and thus 
\begin{equation}\label{weil6}
L=p_\infty(H^0(\Spec\Z,\cO(D)_f))=\{q \in \Q\mid \vert q\vert_\nu \leq \exp(D)(\nu), \quad\forall \nu\neq \infty \}.
\end{equation}
By Proposition \ref{setup1}, $H^1(D)$ is the tolerant  $\spm$-module $H^1(D) = (H\A_\Q, \cR)$ where the relations are given by \eqref{cokerpsi}, \ie
\[
 \cR_k:=\{ (x,y)\in H\A_\Q(k_+)\times  H\A_\Q(k_+)\mid x-y\in {\rm Image}\, \psi(k_+)\},
\]
and where $\psi$ is as in \eqref{psi}.
    By construction  one has $\Q\times \cO(D)\simeq G\times [-e^a,e^a]$.
  After quotienting both sides of \eqref{psi} by $G=\Q\times j(\cO(D)_f)$, the map $\psi$  becomes
 	 \begin{equation}\label{weil5}
\phi: \Vert H\R \Vert_{e^a}\to H(\R/L), \quad  \phi(x)=x+L \in \R/L\quad\forall x\in [-e^a,e^a]\subset \R,
\end{equation}
where $L\subset \Q\subset \R$ is the lattice \eqref{weil6}. Thus one obtains that the relation $\cR$ is equal to the inverse image by the map $\pi:\A_\Q\to \R/L$ of the relation \eqref{inequdef}.\newline
$(ii)$~Follows from Proposition \ref{pullback} $(ii)$.\endproof

\section{The $\Gamma$-space $\bfh(D)$}\label{secB}

Let $\phi: A \to B$ be a morphism of abelian groups. To $\phi$ one associates  the following (short) complex  $\mathfrak C=\{C_n,\phi_n\}$ of abelian groups indexed in  non-negative degrees
\begin{equation}\label{short}
\mathfrak C=\{C_1\stackrel{\phi_1}{\to} C_0\};\qquad C_0=B, \ C_1=A, \quad \phi_1(x):=\phi(x).\end{equation}
 
  The Dold-Kan correspondence associates to $\mathfrak C$ the  simplicial abelian group (see \cite{GJ} III.2, Proposition 2.2) 
  \begin{equation}\label{descriptshorto}
\cA_n=\bigoplus_{\cF(n,k)} C_k, \quad \cF(n,k):=\{\sigma\in \Hom_\Delta([n],[k])\mid \sigma([n])=[k]\}
\end{equation}
where $\Delta$ denotes the simplicial category.
The direct sum in \eqref{descriptshorto} repeats the term $C_k$ of the chain complex as many times as  the number of elements of  the set $\cF(n,k)$ of surjective morphisms $\sigma\in \Hom_\Delta([n],[k])$. For the short complex  $\mathfrak C$, the allowed values of $k$ are $k=0,1$. Therefore the set $\cF(n,0)$ is  reduced to   $\Hom_\Delta([n],[0])=:\Delta_n^0$, \ie to a single point.  A morphism  $\xi\in\Hom_\Delta([n],[1])=:\Delta^1_n$ is characterized by  $\xi^{-1}(\{1\})$ which is an hereditary subset of $[n]$. It follows that the  vertices in $\Delta^1_n$  are   labelled by the
\begin{equation}\label{identxi}
\xi_j, ~ 0\leq j\leq n+1, \quad \xi_j(k)=1\iff k\geq  j.
\end{equation}
 For each integer $n\geq 0$  the finite set $F(n):=\cF(n,1)$ of surjective elements  $\xi\in\Delta^1_n$  excludes $\xi_{0}$ and  $\xi_{n+1}$, thus the set $F(n)=\{\xi_j, \, 1\leq j\leq n \}$ has $n$ elements. This gives the  identification  
 \begin{equation}
\cA_n= HB(1_+)\oplus HA(F(n)_+)\label{descriptshort}
\end{equation}  
We refer  to \cite{GJ} (III.2, pp. 160-161) for a detailed description of  the simplicial structure,   namely the definition  for each $\theta \in
 \Hom_\Delta([m],[n])$ of a map of sets $\cA(\theta):\cA_n\to \cA_m$. 
 Next, we introduce some notations.
 
  We  identify the  opposite  $\dop$  of the simplicial category with (the skeleton of) the category of finite intervals.   An interval  is  a totally ordered set with the smallest element distinct from the largest one.  A morphism of intervals is a non-decreasing map  that preserves the smallest and largest elements. The canonical contravariant functor  $\Delta\longrightarrow \dop$   which identifies the opposite category of  $\Delta$  with  $\dop$ (described by intervals as above),  maps the  finite ordinal object $[n]=\{0,1,\ldots,n\}$ in $\Delta$  to the interval $[n]^*=\{0, \ldots, n+1\}$. 

We denote by $\crel$    the category of pairs  of pointed sets $(X,Y)$, with $X\supset Y$. The morphisms are maps  of pairs of pointed sets. We let $c:\crel\longrightarrow \Ses$ be the functor $(X,Y)\to X/Y$  of collapsing  $Y$ to the base point $\ast$.

Let  $\partial:\dop\longrightarrow \crel $ be the functor that  replaces an interval $I$ with the pair $\partial I=(X,Y)$, where $X$ is the set $I$ pointed by its smallest element, and $Y\subset X$ is the subset formed by the smallest and largest elements of $I$.

Finally, we denote by $\gamma:\crel \longrightarrow\Gamma\crel$ (see section \ref{secA}) the functor that associates to an object $(X,Y)$ of $\crel$ the covariant functor
\[
\gamma(X,Y):\gop\longrightarrow \crel,\quad k_+:=\{0,\ldots,k\}\mapsto (X\wedge k_+,Y\wedge k_+).
\]
\vspace{.01in}

  The following formula defines a covariant functor that associates to a pair of pointed sets  an abelian group directly related to the morphism $\phi: A\to B$
 \begin{equation}\label{descript1}
  H_\phi:\crel\longrightarrow \Ab,  \qquad	H_\phi(X,Y):=HB(Y)\times HA(X/Y).
 \end{equation}
 On morphisms	$f:(X,Y)\to (X',Y')$  in $\crel$ with $\alpha= (\alpha_Y,\alpha_{X/Y})\in H_\phi(X,Y)$,  the functor $H_\phi$ acts as follows
 \begin{align}\label{descript2}
 	H_\phi(f)(\alpha)&=\left(\alpha_{Y'}, HA(f)(\alpha_{X/Y})\right) \\\notag  \alpha_{Y'}(y')&:=HB(f)(\alpha_{Y})(y')+\sum_{x\in X\setminus Y, f(x)=y'}\phi(\alpha_{X/Y}(x)),\quad y'\neq *.\notag
 \end{align}
   By \cite{CCAtiyah} (Proposition~4.5), the Dold-Kan correspondence for the short complex \eqref{short}, \ie  the simplicial abelian group $\mathcal A_n$  in  \eqref{descriptshort}   is canonically isomorphic    to the composite functor $H_\phi\circ \partial: \dop\longrightarrow \Ab$, with $H_\phi$ as in \eqref{descript1}. 
  By composing $H_\phi$  with the Eilenberg-MacLane functor $H$ one obtains  a covariant functor $HH_\phi=H\circ H_\phi :\crel\longrightarrow \smod$  to the category of $\sss$-modules,  which is naturally isomorphic  to the functor $((U\circ H_\phi)\times \id)\circ \gamma$, where $U:\Ab\longrightarrow \Ses$ is the forgetful functor (see \opcit Lemma~4.6). Moreover, as shown in \opcit (Theorem~4.7)  the $\Gamma$-space associated by the Dold-Kan correspondence to the complex $\mathfrak C$ is canonically isomorphic  to the functor 
 \begin{equation}\label{gammasp}((U\circ H_\phi)\times \id)\circ \gamma\circ \partial : \dop \longrightarrow \smod.
\end{equation}
  Notice  that the above construction  involves the morphism $\phi:A\to B$  {\em only} through  the composite functor $U\circ H_\phi:\crel \longrightarrow \Ses$. This latter functor  is still meaningful  when one restricts $\phi$ to a sub-$\sss$-module  $E$    of the $\sss$-module $HA$ and it is given by 
  \begin{equation}\label{descript1sub}
   H_\phi\vert_E:\crel\longrightarrow \Ses,  \qquad	H_\phi(X,Y):=HB(Y)\times E(X/Y).
 \end{equation} 
 This  provides the following construction 
 
 \begin{definition}  Let $\phi:A\to B$ be  a morphism of abelian groups and $E$ a sub $\sss$-module of the $\sss$-module $HA$. We denote by $ \Gamma(\phi\vert_E)$ 
 	the $\Gamma$-space obtained as a sub-functor of \eqref{gammasp}
 \begin{equation}\label{descript1sub1}
 \Gamma(\phi\vert_E):= (H_\phi\vert_E\times \id)\circ \gamma\circ \partial : \dop \longrightarrow \smod.
 \end{equation}
 \end{definition}
 
  When evaluated on the object $1_+=\{0,1\}$ of $\gop$, the $\Gamma$-space $\Gamma(\phi\vert_E)$  defines   a sub-simplicial set of the Kan simplicial set $\mathcal A_n$  in \eqref{descriptshort}.  However it is not in general a Kan simplicial set, thus one needs to exert care  when  considering its homotopy.
 We refer to \cite{CCgromov}  2.1,  for the definition of the homotopies  used there. The relation $\cR$ of homotopy between $n$-simplices $(x,y)\in X_n\times X_n$  is defined as follows
\begin{equation}\label{Maydefn}
 (x,y)\in \cR \iff \partial_j x=\partial_jy\, \forall j \, \&\,  \exists z~\text{s.t.}~ \partial_jz=s_{n-1}\partial_j x\, \forall j<n, \ \partial_nz=x,\, \partial_{n+1}z=y.
  \end{equation}

In general, the  relation $\cR$   in  \eqref{Maydefn} fails to be an equivalence relation.  In place of the quotient we consider pairs of sets and relations.  We define $\pik_n$ to be the set  of spherical elements in $X_n$ (\ie of $n$-simplices $x$, with $\partial_j x=*$ $\forall j$), endowed with the relation $\cR$. Then we have the following result (we refer to section~\ref{secA} for the notion of tolerant module)

\begin{proposition}\label{proppi0} Let $\phi:A\to B$ be  a morphism of abelian groups and let $E$ be a sub-$\spm$-module of the Eilenberg-MacLane $\spm$-module $HA$ (where $(\pm 1)x:=\pm x$).
\begin{enumerate}
\item[(i)] The homotopy $\pik_1(\Gamma(\phi\vert_E))$ is the sub-$\spm$-module $\ker(H\phi\vert_E)$ of $E$ \begin{equation}\label{kerphi}
 \ker(H\phi\vert_E)(k_+)=\{x\in E(k_+)\mid H\phi(x)=*\}.
  \end{equation} 
\item[(ii)]The homotopy $\pik_0(\Gamma(\phi\vert_E))$ is the tolerant $\spm$-module $HB$ endowed with the relations 
	\begin{equation}\label{relk}
 \cR_k=\{(x,y)\in HB(k_+)\times HB(k_+)\mid x-y\in H\phi(E(k_+))\}\quad k\in\N.
  \end{equation}
  \end{enumerate}
\end{proposition}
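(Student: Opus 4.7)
The idea is to unfold the Dold--Kan description of $\Gamma(\phi|_E)$ in low degrees, identify spherical elements directly, and analyze the one-step homotopy relation \eqref{Maydefn} by embedding in the ambient simplicial abelian group obtained by replacing $E$ with $HA$, which is Kan.

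\textbf{Unfolding.} By \eqref{descript1sub1} and \eqref{descriptshort}, for each $k_+\in\gop$ the simplicial pointed set $X_\bullet:=\Gamma(\phi|_E)(k_+)$ has $X_n = HB(k_+)\times E(F(n)_+\wedge k_+)$, with Dold--Kan face and degeneracy maps inherited from \eqref{descript2}. In degrees $\le 1$,
\[
X_0 = HB(k_+), \qquad X_1 = HB(k_+)\times E(k_+),
\]
and a direct application of \eqref{descript2} to the interval maps $[1]^*\to [0]^*$ corresponding to $d_0,d_1:[0]\to[1]$ yields $\partial_0(b,a)=b$ and $\partial_1(b,a)=b+H\phi(a)$ (the $\phi$-correction in $\partial_1$ arises because under $d_1^*$ the unique interior point of $[1]^*$ lands on the non-base of $\{0,1\}$, activating the sum in \eqref{descript2}).

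\textbf{Proof of (i).} A $1$-simplex $(b,a)\in X_1$ is spherical iff $b=0$ and $H\phi(a)=0$, so the spherical $1$-simplices at level $k_+$ are exactly $\ker(H\phi|_E)(k_+)$, the asserted sub-$\spm$-module of $E$. For the tolerance relation, let $\cA_\bullet(k_+)$ denote the ambient simplicial set obtained from the same construction with $E$ relaxed to $HA$; this is a simplicial abelian group, hence Kan, and its Dold--Kan data give $\pi_1(\cA_\bullet(k_+))=\ker(H\phi)(k_+)$, with each class uniquely represented by a spherical $1$-simplex (since $C_2=0$ there are no boundaries). A Moore homotopy between $(0,a)$ and $(0,a')$ in $X_\bullet$ is automatically a Moore homotopy in $\cA_\bullet(k_+)$, hence a Kan homotopy, forcing $a=a'$. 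Conversely, $z=s_1(0,a)=(0,a,0)\in X_2$ (the degeneracy preserves the sub-$\spm$-module $E$) is a Moore self-homotopy by the simplicial identities $\partial_0 s_1=s_0\partial_0$, $\partial_1 s_1=\partial_2 s_1=\id$. Thus the tolerance relation on sphericals collapses to equality, and $\pik_1(\Gamma(\phi|_E))=\ker(H\phi|_E)$ as $\spm$-modules.

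\textbf{Proof of (ii) and functoriality.} Every $x\in X_0=HB(k_+)$ is spherical. Specializing \eqref{Maydefn} to $n=0$, $(x,y)\in\cR_k$ iff there exists $z=(b,a)\in X_1$ with $\partial_0 z=x$ and $\partial_1 z=y$; the face formulas above then give $x=b$ and $y=b+H\phi(a)$, hence $x-y=-H\phi(a)\in H\phi(E(k_+))$, the image being symmetric because $E$ is closed under the $\spm$-action $a\mapsto -a$. This is precisely \eqref{relk}. Naturality of the face maps and of $E$ in $k_+$ upgrades $(HB,\cR)$ to an object of $\Gamma\cT_*$ as in Definition \ref{cT}. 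The main obstacle is step (i): since $X_\bullet$ is not itself Kan, one cannot apply the simplicial-abelian-group identification of $\pi_1$ internally; the plan resolves this by passing to the Kan envelope $\cA_\bullet(k_+)$ for the direction ``Moore $\Rightarrow$ equal'' and using the explicit degeneracy $s_1$ for the converse, while a purely local alternative would require writing all three face maps $\partial_i:X_2\to X_1$ explicitly from \eqref{descript2} in terms of the two surjections $\xi_1,\xi_2\in F(2)$.
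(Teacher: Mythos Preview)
Your proof is correct and follows essentially the same route as the paper's: both unfold $\Gamma(\phi|_E)$ in simplicial degrees $0$ and $1$ via \eqref{descript1sub}, obtain the same face formulas $\partial_0(b,a)=b$, $\partial_1(b,a)=b+H\phi(a)$, and read off the spherical elements and the relation $\cR_k$ directly. The only substantive difference is in part (i): the paper dismisses the triviality of the homotopy relation on spherical $1$-simplices by citing \cite{CCAtiyah}, Proposition~4.11~(iii), whereas you give a self-contained argument by embedding $X_\bullet$ in the ambient simplicial abelian group $\cA_\bullet(k_+)$ (Kan, with $N_2=C_2=0$ so that spherical $1$-cycles inject into $\pi_1$) and using the degeneracy $s_1$ for reflexivity. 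That embedding trick is a clean way to avoid writing out the three faces on $X_2$ explicitly and is a perfectly good substitute for the cited reference.
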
  
\begin{proof}
    By construction,  $\Gamma(\phi\vert_E)$ is the composite of  the functors $ H_\phi\vert_E:\crel \longrightarrow \Ses $ of \eqref{descript1sub} and $\gamma\circ\partial:\dop \longrightarrow\crel$.  One has $\partial[0]^*=(X,Y)$ where $X=Y=\{0,1\}$ with base point $0$. Thus  $\gamma\circ\partial[0]^*(k_+)=(X\wedge k_+,Y\wedge k_+)=(k_+,k_+)$ and by \eqref{descript1sub}
\[
H_\phi\vert_E(\gamma\circ\partial[0]^*)(k_+)=H_\phi\vert_E(X\wedge k_+,Y\wedge k_+)=HB(k_+).
\]
One has $\partial[1]^*=(X,Y)$, where $X=\{0,1,2\}$, $Y=\{0,2\}$ with base point $0$. Thus one has $\gamma\circ\partial[1]^*(k_+)=(X\wedge k_+,Y\wedge k_+)$ and $Y\wedge k_+=k_+$, while $(X\wedge k_+)/(Y\wedge k_+)=\{0,1\}\wedge k_+=k_+$ and by \eqref{descript1sub} it follows that
        \begin{equation}\label{pideg1}
 H_\phi\vert_E(\gamma\circ\partial[1]^*)(k_+)= E(k_+)\times HB(k_+).
\end{equation}
The boundaries $\partial_j: H_\phi\vert_E(\gamma\circ\partial[1]^*)\to H_\phi\vert_E(\gamma\circ\partial[0]^*)$ are obtained as in \cite{CCAtiyah} Proposition 4.11 
\begin{equation}\label{pi0}
\partial_0(\psi)=\psi_2, \ \ \partial_1(\psi)=H\phi(\psi_1)+\psi_2, \quad \forall \psi=(\psi_1,\psi_2)\in H_\phi\vert_E(\gamma\circ\partial[1]^*).
\end{equation}

$(i)$~The spherical condition $\partial_j(\psi)=*$ on  $\psi\in H_\phi\vert_E(\gamma\circ\partial[1]^*)(k_+)$ means that $\psi_2=0$ and $H\phi(\psi_1)=0$. Thus the solutions  correspond to $\ker(H\phi\vert_E)(k_+)$ as in \eqref{kerphi}. One shows as in \cite{CCAtiyah} Proposition 4.11 $(iii)$, that the relation of homotopy is the identity.\newline
$(ii)$~The relation $\cR_k$ on elements of $H_\phi\vert_E(\gamma\circ\partial[0]^*)(k_+)$ is defined as follows
\begin{equation}\label{pizeroR}
(\alpha,\beta)\in \cR_k\iff \exists 	\psi \in H_\phi\vert_E(\gamma\circ\partial[1]^*)(k_+)~\text{s.t.}~\partial_0(\psi)=\alpha \  \& \ \partial_1(\psi)=\beta.
\end{equation}
By \eqref{pi0} it coincides with \eqref{relk}.\vspace{.03in}

 \emph{Note} that this relation is a tolerance relation \ie is  symmetric since $E$ is a sub $\spm$-module of the $\spm$-module $HA$, so that 
\[
x-y\in H\phi(E(k_+))\iff y-x\in H\phi(E(k_+)).
\]
\end{proof}
\vspace{.1in}

 %\subsection{The $\Gamma$-space $\bfh(D)$} \label{secta3}
The above general construction applies to the geometric adelic context and by implementing the action of $\Q^\times$ on adeles, we obtain the following variant of  Proposition~4.9 in \cite{CCAtiyah}.

 \begin{proposition}\label{bfhd} Let  $D$ be an Arakelov divisor on $\spzb$. Let $A=\Q \times \A_\Q$, $B=\A_\Q$ and $\alpha:A \to B$, $\alpha(q,a) = q+ a$. Let  $E= H\Q\times H\cO(D)$ be the sub $\spm$-module of $HA$ as in Proposition \ref{setup}. 
 	Then the  functor  
\begin{equation}\label{propgam0}
\bfh(D):=\Gamma(\alpha\vert_E): \dop \longrightarrow \smod
\end{equation}
defines a $\Gamma$-space
  that depends only on the  linear equivalence class of $D$.
 \end{proposition}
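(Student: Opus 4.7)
The first assertion, that $\bfh(D)$ is a well-defined $\Gamma$-space, is immediate from \eqref{descript1sub1}: the adelic addition $\alpha:\Q\times \A_\Q\to \A_\Q$ is a morphism of abelian groups, and by Proposition~\ref{setup} the product $E=H\Q\times H\cO(D)$ is a sub $\spm$-module of $HA$, so the general construction $\Gamma(\alpha\vert_E)$ directly produces the desired functor $\dop\longrightarrow \smod$.

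For the invariance under linear equivalence, the plan is to exploit the multiplicative action of $\Q^\times$ on the adeles. For $q\in \Q^\times$, multiplication $m_q:\A_\Q\to \A_\Q$, $m_q(x)=qx$, is an additive automorphism scaling the local norm at each place $\nu$ by $\vert q\vert_\nu$. I first verify componentwise that $m_q$ restricts to a bijection $\cO(D)\to \cO(D')$, where $D'$ differs from $D$ by the principal divisor of $q$: at a finite place one has $m_q(p^{-a_p}\Z_p)=p^{-a_p+v_p(q)}\Z_p$, while at the archimedean place $m_q([-e^a,e^a])=[-\vert q\vert_\infty e^a, \vert q\vert_\infty e^a]$; the product formula $\prod_\nu \vert q\vert_\nu =1$ then ensures that every linear equivalence class is realized in this way.

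Next I assemble the commutative square of abelian groups
\[
\begin{array}{ccc}
\Q\times \cO(D) & \xrightarrow{\alpha} & \A_\Q \\
{\scriptstyle m_q\times m_q}\downarrow & & \downarrow {\scriptstyle m_q} \\
\Q\times \cO(D') & \xrightarrow{\alpha} & \A_\Q
\end{array}
\]
whose commutativity reduces to $q(q'+x)=qq'+qx$. All vertical arrows are isomorphisms of abelian groups, and since $m_q(-x)=-m_q(x)$, upon applying the Eilenberg--MacLane functor $H$ they intertwine the action of $\{\pm 1\}=\spm(1_+)$, yielding an isomorphism between the input data $(\alpha,E)$ attached to $D$ and the analogous data attached to $D'$.

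The main remaining point will be that the assignment \eqref{descript1sub1} is itself functorial in isomorphisms of such triples $(A,B,\alpha,E\hookrightarrow HA)$. This is, however, built into the construction: the composite $H_\alpha\vert_E$ of \eqref{descript1sub} depends functorially on $(\alpha,E)$ through the Eilenberg--MacLane functor, the product, and the sub-functor inclusion, while the subsequent functors $\gamma$ and $\partial$ are fixed. Hence the isomorphism of triples induced by $m_q$ is transported to a canonical isomorphism $\bfh(D)\simeq \bfh(D')$ of $\Gamma$-spaces, which gives the desired linear-equivalence invariance.
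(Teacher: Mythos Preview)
Your argument is correct and follows exactly the approach the paper signals in the sentence preceding the proposition (``by implementing the action of $\Q^\times$ on adeles, we obtain the following variant of Proposition~4.9 in \cite{CCAtiyah}''); the paper itself gives no further proof. One minor imprecision: your displayed diagram is not literally a square of abelian groups, since $\cO(D)$ is only a compact subset (its archimedean factor is an interval); the clean formulation is that $m_q\times m_q$ is an automorphism of the abelian group $A=\Q\times\A_\Q$ commuting with $\alpha$ and $m_q:B\to B$, which after applying $H$ restricts to an isomorphism of sub-$\spm$-modules $E\to E'$ --- then your functoriality remark finishes the job.
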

 
 \subsection{Proof of Proposition \ref{setup1}} \label{sectproofprop}
 
 With the notations of section \ref{secA} and Proposition \ref{bfhd}, one has $H^0(D):= \pik_1(\bfh(D))=\pik_1(\Gamma(\alpha\vert_E))$. Proposition \ref{proppi0}, $(i)$, gives 
 \[
 H^0(D)(k_+)=\{x\in E(k_+)\mid H\alpha(x)=*\}.
 \]
 An element $x\in E(k_+)$ is  a $k$-tuple $(q_j,a_j)$ with $q_j\in \Q$ for all $j$, and  $(a_j)\in H\cO(D)(k_+)$, where $H\cO(D):=H\cO(D)_f\times \Vert H\R\Vert_{e^a}$. The condition $H\alpha(x)=*$ means $q_j=-a_j$ for all $j$, so that $x$ is uniquely determined by the $k$-tuple $(q_j)$. Moreover, the allowed $k$-tuples are those for which   $\sum \vert q_j\vert \leq e^a$ and $q_j\in  \cO(D)_f$ for all $j$. One has  
 \[
 \Q\cap \cO(D)_f= H^0(\spz ,\cO(D)_f).
 \]
This proves $(i)$ of Proposition \ref{setup1}. The  statement $(ii)$ follows  from Proposition \ref{proppi0} $(ii)$.  Finally, $(iii)$ follows from Proposition \ref{bfhd}.

\vspace{.1in}

%%=============================================%%
%% For submissions to Nature Portfolio Journals %%
%% please use the heading ``Extended Data''.   %%
%%=============================================%%

%%=============================================================%%
%% Sample for another appendix section			       %%
%%=============================================================%%

%% \section{Example of another appendix section}\label{secA2}%
%% Appendices may be used for helpful, supporting or essential material that would otherwise 
%% clutter, break up or be distracting to the text. Appendices can consist of sections, figures, 
%% tables and equations etc.

\end{appendix}

\begin{Backmatter}

\paragraph{Acknowledgments}
The second author is partially supported by the Simons Foundation collaboration grant n. 691493 and thanks IHES for the hospitality,  where part of this research was done.

\end{Backmatter}

\end{document}